\documentclass[11pt]{amsart}

\usepackage{amsmath, amssymb} 

\newcommand{\limi}[1]{\lim_{{#1} \to \infty}}

\newcommand{\af}{\alpha}
\newcommand{\bt}{\beta}

\newcommand{\dt}{\delta}
\newcommand{\ep}{\varepsilon}

\newcommand{\et}{\eta}

\newcommand{\ld}{\lambda}

\newcommand{\rh}{\rho}
\newcommand{\om}{\omega}
\newcommand{\ta}{\tau}

\newcommand{\Ph}{\Phi}

\newcommand{\Q}{{\mathbb{Q}}}

\newcommand{\R}{{\mathbb{R}}}
\newcommand{\C}{{\mathbb{C}}}
\newcommand{\N}{{\mathbb{N}}}

\DeclareMathOperator{\spec}{sp}
\DeclareMathOperator{\spn}{span}

\newcommand{\BH}{L (H)}

\pagenumbering{arabic}

\newcommand{\andeqn}{\,\,\,\,\,\, {\mbox{and}} \,\,\,\,\,\,}

\newcommand{\wolog}{without loss of generality}

\newcommand{\ca}{C*-algebra}

\newcommand{\pj}{projection}

\newcommand{\pisca}{purely infinite simple \ca}
\newcommand{\kfalg}{separable nuclear unital \pisca}

\newcommand{\CH}{Continuum Hypothesis}




\newcommand{\I}{\infty}

\newcommand{\bbD}{{\mathbb{D}}}
\newcommand{\bbF}{{\mathbb{F}}}
\newcommand{\bbE}{{\mathbb{E}}}

\newcommand{\bbN}{{\mathbb{N}}}
\newcommand{\bbC}{\mathbb{C}}

\newcommand{\rs}{\restriction}

\DeclareMathOperator{\SPAN}{span}

\newcommand{\cC}{{C}}
\newcommand{\cF}{\mathcal{F}}

\newcommand{\cK}{{K}}

\newcommand{\calD}{\mathcal{D}}

\newcommand{\e}{\varepsilon}
\newtheorem{thm}{Theorem}[section]
\newtheorem{theorem}{Theorem}
\newtheorem{coro}[thm]{Corollary}
\newtheorem{corollary}[theorem]{Corollary}

\newtheorem{question}[thm]{Question}

\newtheorem{lemma}[thm]{Lemma}
\newtheorem{lem}[thm]{Lemma}
\newtheorem{prop}[thm]{Proposition}

\theoremstyle{definition}

\newtheorem{notation}[thm]{Notation}
\newtheorem{definition}[thm]{Definition}

\newcounter{my_enumerate_counter}
\newcommand{\pushcounter}{\setcounter{my_enumerate_counter}{\value{enumi}}}
\newcommand{\popcounter}{\setcounter{enumi}{\value{my_enumerate_counter}}}

\def\rs{\restriction}

\newcommand{\lbl}{\label}

\newcommand{\bfa}{{\mathbf{a}}}
\newcommand{\bfb}{{\mathbf{b}}}
\newcommand{\bfc}{{\mathbf{c}}}

\newcommand{\cP}{{\mathcal{P}}}

\newcommand{\cU}{{\mathcal{U}}}
\newcommand{\cV}{\mathcal{V}}
\newcommand{\cW}{\mathcal{W}}

\DeclareMathOperator{\Ann}{Ann}

\title[The commutant of $\BH$ in its ultrapower]
{The commutant of $\BH$ in its ultrapower may or may not be trivial}

\author{Ilijas Farah}

\address{Department of Mathematics and Statistics\\
York University\\
4700 Keele Street\\
North York, Ontario\\
Canada, M3J 1P3\\
and 
Matematicki Institut \\
Kneza Mihaila 34 \\
Belgrade \\
Serbia}

\urladdr{http://www.math.yorku.ca/$\sim$ifarah}

\email{ifarah@mathstat.yorku.ca}

\thanks{Ilijas Farah and Juris Stepr\={a}ns
 were partially supported by NSERC.
 N.~Christopher Phillips was partially supported
 by NSF grant DMS-0701076,
 by the Fields Institute for Research in Mathematical Sciences,
 Toronto, Canada,
 and by an Elliott Distinguished Visitorship at the Fields Institute}

\author{N.~Christopher Phillips}

\address{Department of Mathematics\\
University of Oregon\\ Eugene, Oregon 97403-1222, U.S.A.}

\urladdr{http://www.uoregon.edu/$\sim$ncp/}

\email{ncp@uoregon.edu}

\author{Juris Stepr\={a}ns}

\address{Fields Institute\\ 222 College Street\\ Toronto, ON\\ Canada}
\email{steprans@fields.utoronto.ca}

\subjclass{46L05, 03E50}


\date{\today}

\begin{document}

\begin{abstract}
Kirchberg asked in 2004 whether the commutant of $\BH$
in its (norm) ultrapower is trivial.
Assuming the Continuum Hypothesis,
we prove that the answer depends on the choice of the ultrafilter.
\end{abstract}

\maketitle

Let $H$ be a separable infinite dimensional complex Hilbert space,
fixed throughout. The purpose of this paper is to prove that,
assuming the Continuum Hypothesis, the commutant of $\BH$ (the
algebra of bounded linear operators on $H$) in its ultrapower depends
on the choice of the ultrafilter. This provides a somewhat
surprising---and somewhat incomplete---answer to Question~2.22
of~\cite{Kirc:Central}.

We follow the convention that $0\in \bbN$.
Let $A$ be a \ca,
and let $\cV$ be a nonprincipal ultrafilter on $\bbN$
(equivalently, a point in $\bt \N \setminus \N$).
(In the operator algebra literature,
nonprincipal ultrafilters are usually denoted~$\om$;
since in the set theory literature
$\om$ is reserved for the least infinite ordinal we
suppress using this symbol altogether.)
We denote by $\ell^{\infty} (A)$ the \ca\ of all bounded functions
from $\N$ to $A$,
and we denote by $A^{\cV}$ the
ultrapower of~$A$ associated with~$\cV$ (\cite{GeHa}, \cite{BYBHU}).
More explicitly, $A^{\cV}$ is the quotient of $\ell^{\infty} (A)$
by the (two-sided, norm-closed, selfadjoint) ideal
\[
c_{\cV} (A) =
\big\{ (a_n) \in \ell^{\infty} (A)
  \colon {\mbox{$\lim_{n \to \cV} \| a_n \| = 0$}} \big\}.
\]
(This algebra is denoted $A_{\cV}$
in \cite{Kirc:Central} and~\cite{KP1}.)
We identify $A$ with the subalgebra of $A^{\cV}$
consisting of all
\[
(a, a, a, \ldots ) + c_{\cV} (A)
\]
for $a \in A$, that is, the image in $A^{\cV}$ of all constant sequences
in $\ell^{\infty} (A)$.
If $A \subseteq B,$
we denote by $A' \cap B$ the relative commutant of~$A$ in~$B,$
that is
\[
A' \cap B
 = \big\{ b \in B \colon {\mbox{$a b = b a$ for all $a \in A$}} \big\}.
\]
(By convention, $A'$ is used
only when $B$ is the algebra of bounded operators on a Hilbert space.)
Following Kirchberg~\cite{Kirc:Central},
we define the invariant $F_{\cV} (A)$ of $A$ to be the
quotient of
$A'\cap A^{\cV}$ by the annihilator
\[
\Ann \big( A, A^{\cV} \big)
 = \{b \in A^{\cV} \colon {\mbox{$b a = 0$ for all $a \in A$}} \}.
\]
If $A$ is unital then the annihilator is trivial and
\[
F_{\cV} (A) = A' \cap A^{\cV}.
\]

The algebra $F_{\cV} (A)$ for unital~$A$,
often denoted $A_{\cV}$ in the literature (see~\cite{Iz}),
has played an important role in the study of \ca s,
particularly in the classification of \kfalg s
and group actions on them.
(See \cite{Kr}, \cite{KP1}, \cite{Ph}, and~\cite{Iz}.)
The most important result, proved for example in~\cite{KP1},
is that if $A$ is a \kfalg,
then $F_{\cV} (A)$ is again a \pisca.
See~\cite{GeHa} for further discussion of the uses of algebras
of this type,
and~\cite{Kirc:Central} for more recent applications
of $F_{\cV} (A)$.

The use of $F_{\cV} (A)$ for \pisca s parallels an older use
of its tracial analog for factors of type II$_1$.
We briefly describe this use to give context,
but it is not needed in the sequel.
In place of operator norm convergence,
one uses convergence in the $L^2$~norm derived from the trace
in the definition of $c_{\cV} (A)$.
For example, if $M$ is the hyperfinite factor of type II$_1$,
then the tracial analog of $F_{\cV} (M)$
is again a factor of type II$_1$.
(See Lemma XIV.4.5 and Theorems XIV.4.6 and XIV.4.18 of~\cite{Tk3}.
This is also in~\cite{Cn2},
and in~\cite{FaHaSh:Model} it was shown that this is a consequence of
the Fundamental Theorem for Ultraproducts from logic.)
As one example, this fact is used to prove that outer actions
of suitable groups on the hyperfinite factor of type II$_1$
have the Rokhlin property,
in turn a key step in the classification of such actions.
See~\cite{Jn} for the case of finite groups,
and~\cite{Oc} for the case of countable amenable groups.

Assuming the \CH, Ge and Hadwin
proved (\cite[Corollary~3.4]{GeHa})
that if $A$ is separable,
then the isomorphism class of $F_{\cV} (A)$ is
independent of the choice of the
nonprincipal ultrafilter~$\cV$.
But $\BH$ is not, of course, separable in the norm topology.
(The easiest way to see this is to note that projections
onto the closed subspaces spanned by all subsets of a
fixed orthonormal basis form an uncountable discrete set.)

Let $\cK (H)$ denote the ideal of
compact operators in $\BH$.
For the Calkin algebra $\cC (H) = \BH / \cK (H)$,
Kirchberg proved (\cite[Corollary~2.21]{Kirc:Central})
that $F_{\cV} (\cC (H)) = \bbC$.
This also implies  $F_{\cV} (\BH) \subseteq \bbC + \cK (H)^{\cV}$.
In \cite[Question~2.22]{Kirc:Central},
Kirchberg asked whether $F_{\cV} (\BH ) = \bbC$.
The answer to this question is somewhat surprising.

\begin{theorem}\lbl{T1}
There is a nonprincipal ultrafilter $\cV$
such that $F_{\cV} (\BH ) \neq \bbC$.
\end{theorem}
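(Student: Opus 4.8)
The plan is to produce a nontrivial element of the relative commutant directly, represented by a bounded sequence of compact operators, and to choose $\cV$ so as to make that sequence centralize all of $\BH$. First I would record the reduction furnished by the discussion above. Since $\BH$ is unital, $F_{\cV}(\BH) = \BH' \cap \BH^{\cV}$, and Kirchberg's containment $F_{\cV}(\BH) \subseteq \C + \cK(H)^{\cV}$ holds for every~$\cV$. Moreover $\C \cap \cK(H)^{\cV} = \{0\}$, because if $\lambda \cdot \ONE = (k_n) + c_{\cV}(\BH)$ with each $k_n$ compact, then $\lim_{n \to \cV} \| \lambda \cdot \ONE - k_n \| = 0$, while $\| \lambda \cdot \ONE - k_n \| \geq |\lambda|$ (the essential norm), forcing $\lambda = 0$. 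Hence it suffices to exhibit a single nonprincipal $\cV$ together with a bounded sequence $(k_n)$ of self-adjoint compact operators satisfying $\lim_{n \to \cV} \| k_n \| = 1$ and $\lim_{n \to \cV} \| k_n b - b k_n \| = 0$ for every $b \in \BH$; the class of $(k_n)$ is then a nonzero element of $\cK(H)^{\cV}$ commuting with $\BH$, hence lies in $F_{\cV}(\BH) \SM \C$.

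For a single \emph{separable} subalgebra such a sequence always exists, and $\cV$-independently. Indeed, for separable $A \subseteq \BH$ the pair $(A, \cK(H))$ admits a quasicentral approximate unit in the sense of Arveson and Voiculescu: finite-rank positive contractions $e_m \nearrow \ONE$ with $\| e_m a - a e_m \| \to 0$ for all $a \in A$. Normalizing, $(e_m)$ is a sequence of norm-one compacts asymptotically commuting with all of~$A$ along every ultrafilter. The difficulty is precisely that $\BH$ is nonseparable: there is no quasicentral approximate unit for all of $\BH$ (a long-range coordinate swap defeats any fixed such sequence), and no single sequence can asymptotically commute, even along a cleverly chosen ultrafilter, with all of the $2^{\aleph_0}$ operators at once. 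This is what makes the answer depend on the choice of~$\cV$.

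Under \CH\ the norm-density character of $\BH$ is $\aleph_1$, so I would fix a norm-dense set $\{ d_\alpha : \alpha < \omega_1 \}$ and the continuous increasing chain of separable subalgebras $A_\alpha = C^{*}( \ONE, \{ d_\beta : \beta < \alpha \} )$, whose union is dense in $\BH$. I would then build, by recursion on $\alpha < \omega_1$, a $\subseteq^{*}$-decreasing tower $(X_\alpha)_{\alpha < \omega_1}$ of infinite subsets of $\N$ together with the values of one sequence $(k_n)$ of norm-one self-adjoint compacts, so that for each $\alpha$ and each sufficiently large $n \in X_\alpha$, the operator $k_n$ approximately commutes with a finite $2^{-\alpha}$-dense subset of $A_\alpha$ (hence, by the usual $\ep/3$ estimate and $\| k_n \| = 1$, asymptotically with all of $A_\alpha$) and is supported far out along the fixed orthonormal basis (so that it also asymptotically commutes with the compact elements of $A_\alpha$). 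The local steps are supplied by the separable case: at successor stages one invokes the quasicentral approximate unit for $A_\alpha$, and at countable limit stages one passes to a pseudo-intersection of the countably many sets produced so far, available for countable families in ZFC. Finally I would let $\cV$ be any nonprincipal ultrafilter containing $\{ X_\alpha : \alpha < \omega_1 \}$, which exists because a tower has the finite intersection property. Then for $b \in \BH$ and $\ep > 0$, $b$ lies within $\ep$ of some $A_\alpha$, and for all large enough $\alpha' > \alpha$ the membership $X_{\alpha'} \in \cV$ forces $\| k_n b - b k_n \| < \ep$ along $\cV$; so $(k_n)$ commutes with $b$ modulo $c_{\cV}(\BH)$.

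The main obstacle is the coherence of this recursion: a single sequence indexed by the countable set $\N$ must be threaded through a tower of uncountable length $\omega_1$ so that each of the $\aleph_1$ separable subalgebras is eventually centralized along $\cV$. Concretely, one must assign to each $n$ a value $k_n$ that serves all sufficiently late stages at once, keeping $\| k_n \| = 1$ while pushing the supports of the $k_n$ to infinity to annihilate the commutators with compacts; controlling these mod-finite bookkeeping conditions, and verifying that the tower can be chosen so that every $b$ is handled on a $\cV$-large set, is where the \CH\ and the combinatorics of $\cP(\N) / \mathrm{fin}$ genuinely enter and form the technical heart of the argument.
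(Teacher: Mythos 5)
Your first paragraph is fine: the reduction to exhibiting a nontrivial $\cV$-central sequence of norm-one compact operators, via $F_{\cV}(\BH)\subseteq \C+\cK(H)^{\cV}$ and $\C\cap\cK(H)^{\cV}=\{0\}$, is exactly the right target and matches what the paper does. But the construction you then sketch has a genuine gap, and in the form described it cannot be completed. A single sequence $(k_n)_{n\in\N}$ has only countably many entries, and each $k_n$ must be fixed at some countable stage of your recursion; yet it is required to approximately commute with $A_{\alpha}$ for uncountably many later $\alpha$ with $n\in X_{\alpha}$. Invoking a quasicentral approximate unit for $A_{\alpha}$ at stage $\alpha$ produces \emph{new} operators, not values of the already-committed $k_n$; and if instead you redefine $k_n$ for $n\in X_{\alpha}$ at each stage, a fixed $n$ lying in uncountably many $X_{\alpha}$ gets redefined uncountably often and has no final value. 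The tower $(X_{\alpha})$ organizes index sets, but supplies no mechanism for choosing the operators themselves, and the ``long-range coordinate swap'' obstruction you yourself mention is precisely what must be defeated. The step you defer as ``mod-finite bookkeeping'' is not bookkeeping --- it is the entire content of the theorem, and neither \CH\ nor the combinatorics of $\cP(\N)/\mathrm{fin}$ closes this particular recursion.

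What is missing is a concrete countable family of candidate compacts together with a reason that, for \emph{every} $b\in\BH$ and $\e>0$, the set of indices with small commutator is infinite and that these sets are directed. The paper supplies exactly this, in ZFC (note that Theorem~\ref{T1} carries no \CH\ hypothesis, so your appeal to \CH\ already weakens the statement): the operators are $a_h=\sum_j h(j)e_j$ for $h$ ranging over the countable set of slowly-varying nonincreasing step functions; Lemma~\ref{L:4} decomposes an arbitrary $b\in\BH$ into two block-diagonal operators plus an arbitrarily small compact, reducing the continuum-many commutation requirements to the block-diagonal algebras $\calD(f)$; Lemma~\ref{L.Comm.1} shows $\|[a_h,b]\|\leq 2\e\|b\|$ for $b\in\calD(f)$ whenever $\|h-h\circ f\|_{\infty}\leq\e$; and the sets $X_{f,\e}$ are each infinite (Lemma~\ref{L.Flat.1}) and closed under intersection up to the containment $X_{f,\e}\cap X_{g,\dt}\supseteq X_{\max(f,g),\min(\e,\dt)}$, so they generate a filter and hence a (flat) ultrafilter. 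Your proposal contains none of these ingredients, so it does not constitute a proof.
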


\begin{proof}
This follows from
Theorem~\ref{T.Flat.1} and Theorem~\ref{T.Comm.1}.
\end{proof}

\begin{theorem}\lbl{T2}
Assume the \CH.
Then there is a nonprincipal ultrafilter $\cV$
such that $F_{\cV} (\BH) = \bbC$.
\end{theorem}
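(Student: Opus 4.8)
The plan is to deduce Theorem~\ref{T2} from Kirchberg's inclusion $F_{\cV}(\BH)\subseteq\bbC+\cK(H)^{\cV}$ together with a careful construction of $\cV$ under the \CH. If $b\in\BH'\cap\BH^{\cV}$, write $b=\lambda\cdot\ONE+k$ with $\lambda\in\bbC$ and $k\in\cK(H)^{\cV}$; since $\lambda$ and $b$ commute with $\BH$, so does $k$. Thus it suffices to build a nonprincipal ultrafilter $\cV$ with $\BH'\cap\cK(H)^{\cV}=\{0\}$. Unwinding the definitions, a nonzero element of $\BH'\cap\cK(H)^{\cV}$ is represented by a bounded sequence $(k_n)$ of compact operators with $\lim_{n\to\cV}\|k_n\|>0$ and $\lim_{n\to\cV}\|[a,k_n]\|=0$ for every $a\in\BH$. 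The goal is therefore to choose $\cV$ so that no such sequence survives.

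The construction is a transfinite recursion of length $\omega_1$. Under the \CH\ the set $\ell^{\infty}(\cK(H))$ has cardinality $2^{\aleph_0}=\aleph_1$, so I can enumerate all bounded sequences of compact operators as $\{(k^{\alpha}_n)_n:\alpha<\omega_1\}$. I build a $\subseteq^{*}$-decreasing tower $\langle X_{\alpha}:\alpha<\omega_1\rangle$ of infinite subsets of $\bbN$, with $X_0=\bbN$, taking pseudo-intersections at limit stages (possible since each limit ordinal below $\omega_1$ has countable cofinality). At stage $\alpha$ I apply the following dichotomy to $(k^{\alpha}_n)$ along $X_{\alpha}$: if $\liminf_{n\in X_{\alpha}}\|k^{\alpha}_n\|=0$, I choose $X_{\alpha+1}\subseteq^{*}X_{\alpha}$ on which $\|k^{\alpha}_n\|\to0$, which forces $\lim_{n\to\cV}\|k^{\alpha}_n\|=0$ for every ultrafilter $\cV$ containing $X_{\alpha+1}$; otherwise $\|k^{\alpha}_n\|\ge\delta_{\alpha}>0$ on an infinite subset, and I invoke the key lemma below to produce a single operator $a_{\alpha}\in\BH$, a constant $\delta_{\alpha}'>0$, and an infinite $X_{\alpha+1}\subseteq^{*}X_{\alpha}$ with $\|[a_{\alpha},k^{\alpha}_n]\|\ge\delta_{\alpha}'$ for $n\in X_{\alpha+1}$. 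Finally I let $\cV$ be any nonprincipal ultrafilter extending the (proper) filter generated by the tower. Because the tower is $\subseteq^{*}$-decreasing, each witnessing condition is inherited at all later stages, so every candidate is permanently killed: if $(k^{\alpha}_n)$ represented a nonzero element of $\BH'\cap\cK(H)^{\cV}$ it would have positive $\cV$-norm (excluding the first case) yet satisfy $\lim_{n\to\cV}\|[a_{\alpha},k^{\alpha}_n]\|\ge\delta_{\alpha}'>0$ (excluding the second), a contradiction.

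Everything hinges on the key lemma: if $(k_n)$ is a bounded sequence of compact operators with $\|k_n\|\ge\delta$ for all $n$ in some infinite set, then there exist $a\in\BH$, $\delta'>0$, and an infinite subset on which $\|[a,k_n]\|\ge\delta'$. Equivalently, $\BH'\cap\big(\ell^{\infty}(\cK(H))/c_0(\cK(H))\big)=\{0\}$, where $c_0(\cK(H))$ consists of the sequences with $\|k_n\|\to0$; that is, genuine asymptotic commutation with all of $\BH$ forces the norms to zero. To prove it I would first test against the matrix units $e_ie_j^{*}$: since $\|[e_ie_j^{*},k_n]\|\to0$ for each fixed $i,j$, every fixed off-diagonal entry of $k_n$ tends to $0$ and all diagonal entries become asymptotically equal to a common scalar $c_n$. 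This alone is not enough---the sequence $k_n=c_nP_n$, with $P_n$ the projection onto the first $n$ basis vectors, passes the matrix-unit test while having norm $|c_n|$---so the scalar must be eliminated using genuinely non-diagonal operators such as the unilateral shift, together with the fact that commuting with both the shift and the diagonal masa leaves only scalars.

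The main obstacle is precisely this lemma, and specifically the construction of a single fixed witnessing operator $a$ that works along an infinite set. Abstractly one knows only that for each $n$ some local operator detects $k_n$; the difficulty is to assemble these local detectors into one bounded $a\in\BH$. I would do this by choosing, for each $n$ in a suitable infinite subset, a unit vector $\xi_n$ with $\|k_n\xi_n\|\ge\delta$, passing to a further subsequence along which the vectors $\xi_n$ and $k_n\xi_n$ are organized coherently (weakly convergent, and after a small perturbation asymptotically orthogonal when the weak limits vanish), and then defining $a$ as a single partial isometry or finite-rank pattern supported on these asymptotically orthogonal pieces. The asymptotic orthogonality is what guarantees that the local detectors patch together into a bounded operator while still forcing $\|[a,k_n]\|$ to stay bounded below; the cases where the weak limits are nonzero localize to a fixed finite-dimensional subspace and are handled by a fixed finite-rank $a$. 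Granting this lemma, the transfinite recursion above produces the desired ultrafilter and completes the proof.
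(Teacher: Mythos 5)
Your overall architecture is sound and genuinely different from the paper's at the set-theoretic level. Where the paper isolates a combinatorial property of the ultrafilter (selectivity), uses Mathias's theorem on analytic partitions to show that for selective $\cV$ every element of $\BH'\cap\BH^{\cV}$ has a representing sequence that is norm-central (Theorem~\ref{T.S.2.0}), and then invokes the nonexistence of nontrivial norm-central sequences in $\BH$ (Theorem~\ref{T.S.3}), you run a transfinite diagonalization of length $\omega_1$: under the \CH\ you enumerate all bounded sequences of compact operators (Kirchberg's inclusion $F_{\cV}(\BH)\subseteq\bbC+\cK(H)^{\cV}$ justifies restricting attention to compacts) and kill each candidate along a $\subseteq^{*}$-decreasing tower. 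The bookkeeping is correct: the dichotomy at successor stages, the pseudo-intersections at limits of countable cofinality, the finite intersection property, and the final verification all work. What the paper's route buys is a structural statement---it identifies \emph{which} ultrafilters work, Theorem~\ref{T.S.2.0} is of independent interest, and selectivity follows from hypotheses weaker than the \CH\ (e.g.\ Martin's Axiom, as the concluding remarks note); your route is more bare-hands but proves the same theorem, and the reduction to compact representing sequences is a genuine small simplification, since a compact $k$ with $\|k\|\geq\delta$ automatically satisfies $\inf_{\ld\in\bbC}\|k-\ld\|\geq\delta/2$.

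Both routes, however, funnel through the same analytic core, and that is where your proposal has a genuine gap. Your ``key lemma'' is exactly the subsequence form of the paper's Theorem~\ref{T.S.3}: a bounded sequence far from the scalars admits a single $b\in\BH$, an $\ep>0$, and an infinite set on which $\|[b,k_n]\|\geq\ep$. You correctly identify the main obstacle (assembling local detectors into one bounded operator), and your proposed remedy---asymptotic orthogonality plus a strongly convergent sum of partial isometries---is the right idea and is what the paper does at the end of its proof. But your sketch omits the case analysis that makes the patching legitimate: one must separately handle (i) the case where the compressions $r_nk_jr_n$ to a fixed finite-rank projection stay uniformly far from scalars (treated via a norm-convergent subsequence and Lemma~\ref{L:6}), (ii) the case where the off-diagonal corners $r_nk_j(1-r_n)$ do not vanish (where $b=r_n$ itself witnesses non-centrality), and only then (iii) the residual case where the mass escapes to infinity in blocks, where one needs Lemma~\ref{L:DistC} to keep each block uniformly far from scalars before applying the rank-one construction of Lemma~\ref{L:6} blockwise. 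Your weak-limit dichotomy gestures at this trichotomy but does not carry it out, and the claim that nonzero weak limits ``localize to a fixed finite-dimensional subspace'' handled by a fixed finite-rank $a$ is not justified as stated. Until this lemma is actually proved, the transfinite recursion has nothing to feed on at successor stages.
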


\begin{proof}
This follows from Corollary~\ref{C.S.1}, where we prove $F_{\cV}
(\BH) = \bbC$ for a selective ultrafilter $\cV$,
and from the fact that the
Continuum Hypothesis implies selective ultrafilters exist
(Proposition~\ref{P.selective}).
\end{proof}

We record a curious consequence of Theorem~\ref{T1}
and Theorem~\ref{T2}.

\begin{corollary} \lbl{Co.1}
Assume the Continuum Hypothesis.
Then the isomorphism type of
$F_{\cV} (\BH)$ depends on the choice of the ultrafilter~$\cV$.
\qed
\end{corollary}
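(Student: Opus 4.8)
The plan is to deduce this immediately from Theorem~\ref{T1} and Theorem~\ref{T2}, once we observe that the relation ``$F_{\cV}(\BH) = \bbC$'' is genuinely a statement about isomorphism type and not merely about set-theoretic containment. First I would record the standing fact that $\bbC \subseteq F_{\cV}(\BH)$ for every nonprincipal ultrafilter $\cV$: since $\BH$ is unital, the scalar multiples of the identity lie in the center of $\BH^{\cV}$, hence a fortiori in the relative commutant $\BH' \cap \BH^{\cV} = F_{\cV}(\BH)$. Thus for each $\cV$ exactly one of $F_{\cV}(\BH) = \bbC$ or $F_{\cV}(\BH) \supsetneq \bbC$ holds.

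Next I would apply Theorem~\ref{T1} to fix a nonprincipal ultrafilter $\cV_1$ with $F_{\cV_1}(\BH) \neq \bbC$. By the dichotomy just noted this means $F_{\cV_1}(\BH)$ properly contains the scalars, so as a complex vector space it has dimension at least $2$. In particular it cannot be $*$-isomorphic---indeed not even linearly isomorphic---to the one-dimensional algebra $\bbC$; that is, $F_{\cV_1}(\BH) \not\cong \bbC$.

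On the other hand, invoking the Continuum Hypothesis, Theorem~\ref{T2} supplies a nonprincipal ultrafilter $\cV_2$ with $F_{\cV_2}(\BH) = \bbC$, which is of course isomorphic to $\bbC$. Comparing the two witnesses gives $F_{\cV_1}(\BH) \not\cong F_{\cV_2}(\BH)$, so the isomorphism type of $F_{\cV}(\BH)$ is not independent of the choice of $\cV$, as claimed.

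There is no serious obstacle in this deduction: all of the substance is carried by the two theorems it combines. The single point requiring any care---and the one I would make explicit rather than leave implicit---is the passage from the inequality $F_{\cV_1}(\BH) \neq \bbC$ to the genuine non-isomorphism $F_{\cV_1}(\BH) \not\cong \bbC$, which rests solely on the elementary observation that a unital C*-algebra properly containing its scalars has dimension strictly greater than one.
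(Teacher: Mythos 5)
Your argument is correct and is essentially the paper's own: Corollary~\ref{Co.1} is recorded there as an immediate consequence of Theorem~\ref{T1} and Theorem~\ref{T2}, exactly as you deduce it. The one detail you make explicit---that $\bbC\subseteq F_{\cV}(\BH)$ always holds, so $F_{\cV}(\BH)\neq\bbC$ forces dimension greater than one and hence genuine non-isomorphism with $\bbC$---is a correct and worthwhile clarification, but not a different route.
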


It should be noted that in the absence of the \CH,
$F_{\cV} (A)$ depends
on the choice of $\cV$
for every infinite-dimensional separable C*-algebra $A$
(\cite{Fa:Relative} for real rank zero
and~\cite{FaHaSh:Model} in the general case).
We don't know
whether some axiom beyond ZFC  is needed for the conclusion of
Corollary~\ref{Co.1} but it should be noted that an assumption much
weaker than \CH\ suffices.
As noted above, the \CH\  implies that the
isomorphism type of $F_{\cV} (A)$ does not depend on $A$ for a
separable $A$. The reason is that, under the \CH, for any two
nonprincipal ultrafilters $\cV$ and $\cW$ on $\bbN$ there is an
isomorphism between $A^{\cV}$ and $A^{\cW}$ that sends $A$ to itself
(\cite[Theorem~3.1]{GeHa}). The latter fact is an immediate
consequence of the fact, provable in Zermelo-Fraenkel set theory with
the Axiom of Choice (ZFC), that the unit ball of $A^{\cV}$ is
{\emph{countably saturated}} in the logic of metric structures.
(See~\cite[\S 4.4]{FaHaSh:Model2}.)
Since the \CH\  implies that the
ultrapowers are of size $\aleph_1$, a back-and-forth construction
easily gives an isomorphism between $A^{\cV}$ and $A^{\cW}$ that
sends the copy of $A$ in one ultrapower to the copy of $A$ in the
other ultrapower.
(See~\cite{FaHaSh:Model2} for definitions.)

One curiosity deserves a mention here. By the countable saturatedness
of ultrapowers, the Continuum Hypothesis implies that $\BH^{\cU}$ and
$\BH^{\cV}$ are isomorphic for any two nonprincipal ultrafilters
$\cU$ and $\cV$ on $\bbN$. However, if $\cU$ is selective and $\cV$
is flat, then by Corollary~\ref{Co.1} we have  $F_{\cV}(\BH)\not\cong
F_{\cU}(\BH)$ and therefore no isomorphism between $\BH^{\cU}$ and
$\BH^{\cV}$ sends $\BH$ to $\BH$.

\subsection*{Acknowledgments}
We would like to thank Eberhard Kirchberg for useful conversations. The
results of this paper were obtained during the Fall 2007 Fields
Institute semester on operator algebras and the May 2008 Canadian
Operator Symposium in Toronto.

\section{Selective ultrafilters}

\begin{definition}\label{D:Central}
Let $M$ be a C*-algebra or a von Neumann algebra.
We let $[a,b]$ denote the additive commutator, $ab-ba$.
Following~\cite{Kirc:Central}
we say that a norm-bounded sequence $(a_n)_{n \in \N}$ in $M$
is {\emph{central}} if for
every $b \in M$ we have $\lim_{n \to \infty} \| [a_n, b] \|=0$.
A sequence $(a_n)_{n \in \N}$ is {\emph{trivial}}
if
\[
\lim_{n \to \infty} \inf_{\ld \in \bbC} \| a_n - \ld \| = 0.
\]
(Note that such sequences are always central.)
If $\cV$ is an
ultrafilter on $\bbN$
then a norm-bounded sequence $(a_n)_{n \in \N}$ is a
{\emph{$\cV$-central sequence}}
if for every $b \in M$ we have $\lim_{n \to \cV}
\| [a_n, b] \| = 0$.
A $\cV$-central sequence $(a_n)_{n \in \N}$ is {\emph{trivial}}
if $\lim_{n \to \cV} \inf_{\ld \in \bbC} \| a_n - \ld \| = 0$.
\end{definition}

Note that we have defined norm central sequences
and norm trivial central sequences,
even in the case of a von Neumann algebra.
These are not the same as the central sequences usually
considered for a type II$_1$ factor.

\begin{notation}\label{N:Bracket}
If $X$ is a set then $[X]^2$ denotes the set of all two-element
subsets of $X$ and $[X]^{\infty}$ denotes the set of all infinite
subsets of $X$. We consider the space $[\bbN]^{\infty}$ with the
Polish (that is, separable and completely metrizable) topology
inherited from the Cantor set (identified with the
power set of $\bbN$).
\end{notation}

Recall that a subset of a
Polish space is {\emph{analytic}} if it is a continuous image of a
Borel subset of a Polish space. The following result is well-known,
but we sketch a proof of the easy implications for convenience of the
reader.

\begin{thm}[A.~R.\  D.\  Mathias] \label{T.S.1}
The following are equivalent for an ultrafilter $\cV$ on $\bbN$.
\begin{enumerate}
\item \label{T.S.1.0}
If $X_0 \supseteq X_1 \supseteq X_2 \supseteq \dots$
are in
$\cV$ then there exists $X\in \cV$
such $X\setminus \{0,1,\dots, n\}\subseteq X_n$ for every $n\in X$.
\item\label{T.S.1.1}
For every $g\colon \bbN\to \bbN$ there is
$A \in \cV$ such that $g \rs A$
is either constant or injective.
\item\label{T.S.1.2}
For every $E\subseteq [\bbN]^2$ there is $A\in \cV$
such that either
$[A]^2\subseteq E$ or $[A]^2\cap E=\varnothing$.
\item\label{T.S.1.3}
For every analytic $\bbE\subseteq [\bbN]^{\infty}$
there is $A\in \cV$
such that either
$[A]^{\infty} \subseteq \bbE$ or $[A]^{\infty} \cap \bbE=\varnothing$.
\end{enumerate}
\end{thm}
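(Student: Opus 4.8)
The plan is to organize the four conditions into a cluster of elementary equivalences sitting beneath one genuinely deep implication. I would prove that (\ref{T.S.1.0}), (\ref{T.S.1.1}), (\ref{T.S.1.2}) are equivalent by a short cycle of purely combinatorial arguments, dispatch (\ref{T.S.1.3})$\Rightarrow$(\ref{T.S.1.2}) directly, and isolate (\ref{T.S.1.2})$\Rightarrow$(\ref{T.S.1.3}) as the substantial implication, which is Mathias's theorem that selective (``happy'') ultrafilters localize the Galvin--Prikry/Silver Ramsey property to analytic sets; this last step I would cite rather than reprove, and it is the main obstacle.

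For the finitary cycle I would run (\ref{T.S.1.0})$\Rightarrow$(\ref{T.S.1.2})$\Rightarrow$(\ref{T.S.1.1})$\Rightarrow$(\ref{T.S.1.0}). For (\ref{T.S.1.2})$\Rightarrow$(\ref{T.S.1.1}), given $g$, I color $\{m,n\}$ by whether $g(m)=g(n)$; a homogeneous $A\in\cV$ makes $g\rs A$ constant in the first case and injective in the second. For (\ref{T.S.1.0})$\Rightarrow$(\ref{T.S.1.2}), given $E\subseteq[\bbN]^2$ I first use that $\cV$ is an ultrafilter to find $B\in\cV$ on which the ``majority color'' is constant, so that after possibly interchanging colors the sets $V_n=\{m:\{n,m\}\in E\}$ lie in $\cV$ for every $n\in B$; then I apply (\ref{T.S.1.0}) to the decreasing sequence $X_n=B\cap\bigcap_{k\in B,\,k\le n}V_k\in\cV$ to get $X\in\cV$, and check that $A=X\cap B$ is $E$-homogeneous, since for $n<m$ in $A$ one has $m\in X\setminus\{0,\dots,n\}\subseteq X_n\subseteq V_n$. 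For the entry point (\ref{T.S.1.1})$\Rightarrow$(\ref{T.S.1.0}), given a decreasing sequence $(X_n)$ in $\cV$, I would first dispose of the trivial case $\bigcap_n X_n\in\cV$ (take $X=\bigcap_n X_n$) and otherwise replace $X_n$ by $X_n\setminus\bigcap_k X_k$ to assume $\bigcap_n X_n=\varnothing$; then $f(n)=\max\{k:n\in X_k\}$ is finite and total, and applying (\ref{T.S.1.1}) to $f$ forces $f$ to be injective, hence finite-to-one, on some $A\in\cV$ (the constant case is impossible, as it would place a $\cV$-set outside some $X_{c+1}$). A rapid selection inside $A$ — furnished by the $Q$-point property, which (\ref{T.S.1.1}) also yields upon applying the constant-or-injective alternative to the index function of a partition of $\bbN$ into finite sets — then produces $X\in\cV$ whose successive elements are spaced so that $f$ overtakes each earlier element, and this spacing is exactly the diagonal condition (\ref{T.S.1.0}). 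This selection step is the only mildly technical point among the easy implications, and it is entirely routine.

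The implication (\ref{T.S.1.3})$\Rightarrow$(\ref{T.S.1.2}) I would handle by hand: given $E\subseteq[\bbN]^2$, set $\bbE=\{X\in[\bbN]^{\infty}:\{a_0,a_1\}\in E\}$, where $a_0<a_1$ are the two least elements of $X$. Since membership in $\bbE$ depends only on a finite initial segment, $\bbE$ is clopen and in particular analytic, so (\ref{T.S.1.3}) gives $A\in\cV$ with $[A]^{\infty}\subseteq\bbE$ or $[A]^{\infty}\cap\bbE=\varnothing$. Running any pair $\{a,b\}\subseteq A$ with $a<b$ as the two least elements of a member of $[A]^{\infty}$ shows $[A]^2\subseteq E$ in the first case and $[A]^2\cap E=\varnothing$ in the second.

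The genuinely hard direction, and the one I expect to be the main obstacle, is (\ref{T.S.1.2})$\Rightarrow$(\ref{T.S.1.3}): passing from homogeneity for finite colorings of pairs to homogeneity for arbitrary analytic subsets of $[\bbN]^{\infty}$. This is not an elementary diagonalization; it is Mathias's theorem on happy families, proved by the combinatorial-forcing analysis that upgrades open and then Souslin (analytic) sets to being completely Ramsey relative to a selective ultrafilter. Accordingly my plan is to invoke this result as a black box and confine the write-up to sketching only the easy implications, exactly as the statement's attribution to Mathias signals.
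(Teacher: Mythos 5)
Your proof is correct, and where it overlaps with the paper the arguments are identical: the same clopen set of infinite sets determined by the two least elements for \eqref{T.S.1.3}$\Rightarrow$\eqref{T.S.1.2}, the same coloring ``$g(m)\neq g(n)$'' for \eqref{T.S.1.2}$\Rightarrow$\eqref{T.S.1.1}, and the same level function $k\mapsto\max\{n\colon k\in X_n\}$ for \eqref{T.S.1.1}$\Rightarrow$\eqref{T.S.1.0}. The organization is genuinely different, though. The paper runs a single cycle, citing Mathias for \eqref{T.S.1.0}$\Rightarrow$\eqref{T.S.1.3} and proving \eqref{T.S.1.3}$\Rightarrow$\eqref{T.S.1.2}$\Rightarrow$\eqref{T.S.1.1}$\Rightarrow$\eqref{T.S.1.0}, so that even the mutual equivalence of the three finitary conditions passes through the analytic Ramsey theorem. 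You instead close the finitary triangle elementarily by adding a direct proof of \eqref{T.S.1.0}$\Rightarrow$\eqref{T.S.1.2} (the majority-color-plus-diagonalization argument, which checks out: for $n<m$ in $A=X\cap B$ one indeed gets $m\in X\setminus\{0,\dots,n\}\subseteq X_n\subseteq V_n$), and you invoke Mathias only for \eqref{T.S.1.2}$\Rightarrow$\eqref{T.S.1.3}; since \eqref{T.S.1.0}$\Leftrightarrow$\eqref{T.S.1.2} is by then in hand, citing the theorem in either form is legitimate. What your version buys is a self-contained, descriptive-set-theory-free proof that \eqref{T.S.1.0}, \eqref{T.S.1.1}, \eqref{T.S.1.2} are equivalent; the cost is one extra implication. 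One caution on the step you describe as routine: in \eqref{T.S.1.1}$\Rightarrow$\eqref{T.S.1.0}, after partitioning $\bbN$ into intervals $[m_k,m_{k+1})$ so that $f(j)\geq m_k$ whenever $j\in A$ and $j\geq m_{k+1}$, the Q-point selection of one point per interval is not yet enough: a selected $n$ near the top of $[m_k,m_{k+1})$ exceeds $m_k$, so the next selected point, if it falls in the adjacent interval, is only guaranteed to satisfy $f(m)\geq m_k$, not $f(m)\geq n$. You must additionally discard every other interval (one of the two unions of alternate intervals lies in $\cV$), exactly as the paper does with its sets $Z\cap\bigcup_k[m_{2k},m_{2k+1})$ and $Z\cap\bigcup_k[m_{2k+1},m_{2k+2})$; with that refinement your sketch closes completely.
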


\begin{proof}
The fact that \eqref{T.S.1.0} implies \eqref{T.S.1.3} is Mathias's
theorem \cite[Theorem~0.13]{Mat:Happy}.
We include proofs of
the easy implications for the convenience of the reader.

We consider \eqref{T.S.1.3} implies~\eqref{T.S.1.2}.
Let $E \subseteq [\bbN]^2$.
Let $\bbE \subseteq [\bbN]^{\infty}$
be the set of all $X\in [\bbN]^{\infty}$ such that
the pair consisting of the least two elements of $X$ is in $E$.
Then $\bbE$ is analytic (and even open).

For \eqref{T.S.1.2} implies \eqref{T.S.1.1}, given $g$ let
$\{m,n\}\in E$ if and only if $g(m)\neq g(n)$.

For \eqref{T.S.1.1} implies \eqref{T.S.1.0}, we may assume $X_0=\bbN$
and $\bigcap_{n = 0}^{\infty} X_n = \varnothing$.
Define $g (k) = n$ if $k \in X_n \setminus X_{n + 1}$.
Since $\cV$ contains no set on which $g$ is constant,
\eqref{T.S.1.1} implies that
there is $Y \in \cV$ such that $g$ is injective on $Y$.

Recursively find $0 = m_0 < m_1 < \cdots$ so that for all $k$ and
all $j\geq m_{k+1}$ in $Y$,
we have $g (j) \geq m_k$.
Let $g_1\colon \bbN\to \bbN$ be
defined by $g_1(j)=k$ if $m_k \leq j < m_{k + 1}$. Let $Z\subseteq Y$
be a set in $\cV$ on which $g_1$ is injective.

Assume for the moment that
$X = Z \cap \bigcup_{k = 0}^{\infty} [m_{2 k}, \, m_{2 k + 1})$
is in $\cV$.
For $n \in X$ let $k$ be the unique integer
such that $m_{2k} \leq n < m_{2k+1}$.
Then $X \setminus \{ 0, 1, \ldots, n \}$
is disjoint from $\{ 0, 1, \dots, m_{2k+2} - 1\}$.
Therefore for $m \in X \setminus \{0, 1, \ldots, n\}$
we have $g (m) \geq m_{2 k + 1} > n$.
Hence $m \in X_n$ as required.

If $Z\cap \bigcup_{k=0}^{\infty} [m_{2k}, \, m_{2k+1})\notin \cV$,
we
instead set $X=Z\cap \bigcup_{k=0}^{\infty} [m_{2k+1}, \, m_{2k+2})$.
This set is in $\cV$ and
an argument analogous to the above shows that
$X \setminus \{0, 1, \ldots, n \} \subseteq X_n$ for all $n\in X$.
\end{proof}

An ultrafilter satisfying the conditions of Theorem~\ref{T.S.1} is
said to be {\emph{selective}} (or {\emph{Ramsey}}, or even
{\emph{happy}}).
More information on these remarkable objects can be found in
\cite{Mat:Happy} or~\cite{Fa:Semiselective}.
(For example, see Section~5
and Theorem~4.9 of~\cite{Fa:Semiselective}.)

It is well-known that the existence of selective ultrafilters can be
deduced from the Continuum Hypothesis
(\cite[Proposition~0.11]{Mat:Happy}).
We sketch a proof of this
fact for the convenience of the reader.

\begin{prop}\label{P.selective} Assume the Continuum Hypothesis.
Then there exists a nonprincipal  selective ultrafilter.
\end{prop}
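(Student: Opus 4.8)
The plan is to construct the selective ultrafilter by transfinite recursion of length $\aleph_1$, at each stage deciding one coloring while preserving the filter property. The Continuum Hypothesis guarantees exactly the bookkeeping we need: there are precisely $\aleph_1 = 2^{\aleph_0}$ functions $g \colon \bbN \to \bbN$, so I would fix an enumeration $(g_\af)_{\af < \om_1}$ of all of them, and use characterization~\eqref{T.S.1.1} of Theorem~\ref{T.S.1} as the target condition. The construction builds an increasing $\sm$-tower of infinite sets $(A_\af)_{\af < \om_1}$ with the property that the filter they generate is an ultrafilter that is selective.

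First I would set up the recursion so as to maintain a decreasing (mod finite) sequence of infinite subsets of $\bbN$. At a successor stage $\af + 1$, given the infinite set $A_\af$, I apply Ramsey's theorem (or rather the pigeonhole-style dichotomy) to $g_\af$: I would show that there is an infinite $A_{\af + 1} \subseteq^* A_\af$ on which $g_\af$ is either constant or injective. This is the elementary fact that any function on an infinite set admits an infinite subset on which it is one of these two types—if no color class is infinite, one can thin out to an injective restriction by a greedy diagonal choice. At limit stages $\ld < \om_1$ of countable cofinality, I would use a diagonalization (pseudo-intersection) argument to find a single infinite $A_\ld$ with $A_\ld \subseteq^* A_\bt$ for all $\bt < \ld$; since $\ld$ is countable this is just the standard construction of a pseudo-intersection of a countable $\sm$-decreasing tower.

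Having built the tower, I would let $\cV$ be the filter generated by $\{A_\af : \af < \om_1\}$ together with the cofinite filter, and verify it is a nonprincipal ultrafilter: nonprincipal because each $A_\af$ is infinite and the tower descends mod finite, and an ultrafilter because for any $X \subseteq \bbN$ the function $g = \ONE_X$ appears as some $g_\af$, so the corresponding $A_{\af+1}$ lies entirely inside $X$ or entirely inside its complement, putting one of $X, \bbN \SM X$ into $\cV$. Finally, selectivity via \eqref{T.S.1.1} is immediate: given any $g$, it equals $g_\af$ for some $\af$, and $A_{\af + 1} \in \cV$ witnesses that $g \rs A_{\af+1}$ is constant or injective.

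The main obstacle—and the step deserving genuine care—is the bookkeeping that simultaneously guarantees \emph{both} selectivity and the ultrafilter property from a single enumeration. One must check that the tower remains strictly descending mod finite (so that no finite set and no cofinite complement prematurely forces the filter to be principal or improper) while still catching every function $g$. The cleanest route is to enumerate all of $\bbN^{\bbN}$ (which subsumes the indicator functions $\ONE_X$ needed for maximality, since deciding whether $g_\af$ is constant on a set in $\cV$ already settles set membership for two-valued $g_\af$), and to verify that the recursion never stalls: at each successor stage the required infinite subset exists, and at each limit stage the countable tower has an infinite pseudo-intersection. Once these two existence claims are in hand, the verification that $\cV$ is selective is a formality by Theorem~\ref{T.S.1}.
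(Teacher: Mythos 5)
Your proposal is correct and follows essentially the same route as the paper: under CH enumerate all of $\bbN^{\bbN}$ in order type $\omega_1$, build a mod-finite decreasing tower of infinite sets using the constant-or-injective dichotomy at successor stages and a pseudo-intersection at countable limit stages, and conclude selectivity via characterization~\eqref{T.S.1.1} of Theorem~\ref{T.S.1}. The only (cosmetic) difference is that you check directly, via indicator functions, that the filter generated by the tower is already an ultrafilter, whereas the paper simply extends the family (which has the finite intersection property) to an ultrafilter; both arguments are valid.
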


\begin{proof}
Let $\omega_1$ be the first uncountable ordinal.
Use the Continuum Hypothesis to enumerate all functions
$g\colon\bbN\to \bbN$ as $g_{\gamma}$, for $\gamma<\omega_1$.
We claim that there are
infinite sets $A_{\gamma}$ for $\gamma<\omega_1$ such that the
following holds for $\gamma<\omega_1$:
\begin{enumerate}
\item $g_{\gamma}$ is either constant or injective on $A_{\gamma}$.
\item $A_{\gamma}\setminus A_{\delta}$ is finite if $\delta<\gamma$.
\end{enumerate}

We prove the claim by transfinite induction.
If $g_0$ is constant on
an infinite subset of $\N$, let $A_0$ be this set.
Otherwise, the image of $\N$ under $g_0$ is infinite
and we can find an infinite subset $A_0 \subseteq \N$
on which $g_0$ is injective.

Assume $A_{\gamma}$ has been chosen.
Use the argument above, with $A_{\gamma}$ in place of $\N$
and $g_{\gamma + 1}$ in place of $g_0$,
to choose an infinite subset $A_{\gamma + 1} \subseteq A_{\gamma}$
on which $g_{\gamma + 1}$ is either constant or injective.

Now assume $\gamma$ is a limit ordinal and the sets $A_{\delta}$ have
been chosen for $\delta<\gamma$.
Re-enumerate the sets $A_{\delta}$ for $\delta<\gamma$
as $A_j'$ for $j \in \bbN$.
For $j\in \bbN$ let
\[
\eta = \max \big( \big\{ \delta < \gamma \colon
    {\mbox{$A_{\delta}=A_k'$ for some $k < j$}} \big\} \big).
\]
Then the set  $A_{\eta} \setminus \bigcap_{k < j} A_k'$ is finite,
and in particular $\bigcap_{k<j}A_k'$ is infinite for every $j$.
We can therefore choose a sequence $n_0 < n_1 < n_2 < \cdots$
so that $n_j \in \bigcap_{k < j} A_k'$ for all $j \in \bbN$.
Set $B = \{ n_j \colon j \in \bbN \}$.
Then $B$ is infinite and
$B \setminus A_{\delta}$ is finite for all $\delta < \gamma$.
Now $A_{\gamma} \subseteq B$ is chosen as above.

The family $\{A_{\gamma} \}_{\gamma\in\omega_1}$
has the finite intersection property---indeed,
for any finite set $F\subseteq \omega_1$,
if $\gamma = \max(F)$ then there is a finite set
$a\subseteq A_{\gamma}$ such that
$A_{\gamma} \setminus a \subseteq \bigcap_{\delta\in F} A_{\delta}$.
Hence this family can be extended to an ultrafilter,
and this ultrafilter is selective by construction.
\end{proof}

For  $\bfa \in A^{\cV}$
a sequence $(a_n)_{n \in \N}$ in $\ell^{\infty}(A)$
such that
$\bfa = (a_n)_{n \in \N} + c_{\cV} (A)$
is called a {\emph{representing sequence}}.
A von Neumann algebra $M$ is {\emph{separably acting}}
if it is isomorphic to
a weak operator closed selfadjoint subalgebra of the bounded operators
on a separable Hilbert space.
Thus $\BH,$
for our fixed infinite dimensional separable Hilbert space~$H,$
is separably acting.

\begin{thm}\label{T.S.2.0}
Assume $M$ is a separably acting von Neumann
algebra and
$\cV$ is a selective ultrafilter.
Then for
$\bfa \in M^{\cV}$ the following are equivalent.
\begin{enumerate}
\item\label{T.S.2.0:1}
$\bfa \in M' \cap M^{\cV}$.
\item\label{T.S.2.0:2}
$\bfa$ has a representing sequence that is a central sequence.
\end{enumerate}
\end{thm}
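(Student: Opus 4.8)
The plan is to prove the two implications separately, with essentially all of the work in \eqref{T.S.2.0:1}$\,\Rightarrow\,$\eqref{T.S.2.0:2}. For \eqref{T.S.2.0:2}$\,\Rightarrow\,$\eqref{T.S.2.0:1}, suppose $\bfa$ has a representing sequence $(a_n)$ that is central. Then for every $b \in M$ we have $\lim_{n \to \infty} \| [a_n, b] \| = 0$, so a fortiori $\lim_{n \to \cV} \| [a_n, b] \| = 0$, which says exactly that $[\bfa, b] = 0$ in $M^{\cV}$. Since $b$ was arbitrary and $M$ is unital, this gives $\bfa \in M' \cap M^{\cV}$. Note that selectivity of $\cV$ plays no role in this direction.

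For the converse, fix a bounded representing sequence $(a_n)$ for $\bfa$, say with $\| a_n \| \leq \| \bfa \| + 1$. The hypothesis says $\lim_{n \to \cV} \| [a_n, b] \| = 0$ for every $b \in M$, while the goal is to produce a single $A \in \cV$ along which the \emph{ordinary} limit $\lim_{n \in A} \| [a_n, b] \|$ vanishes for \emph{every} $b \in M$ at once; setting $a_n = 0$ for $n \notin A$ then yields a central representing sequence for $\bfa$, which still represents $\bfa$ because $A \in \cV$. Since $M$ is separably acting, its unit ball $U$ is a Polish space in the strong-$*$ topology, so I would fix a countable strong-$*$ dense set $\{ b_k \} \subseteq U$. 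A first attempt handles only countably many directions: for each $k$ and $m$ the set $\{ n \colon \| [a_n, b_k] \| < 1/m \}$ lies in $\cV$, and since a selective ultrafilter is a P-point one can organize these into a single decreasing tower and use the P-point property to obtain $A_0 \in \cV$ with $\lim_{n \in A_0} \| [a_n, b_k] \| = 0$ for all $k$. The difficulty is that this does \emph{not} suffice: the operator norm of a commutator is not strong-$*$ continuous in $b$, so vanishing along the dense set $\{ b_k \}$ does not upgrade to vanishing for an arbitrary $b \in U$.

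To overcome this I would invoke the full strength of selectivity, namely the analytic Ramsey property in item~\eqref{T.S.1.3} of Theorem~\ref{T.S.1}. Consider the set $\bbE \subseteq [\bbN]^{\infty}$ of all $A$ that are \emph{bad}, meaning that there exist $b \in U$ and $\ep > 0$ with $\| [a_n, b] \| \geq \ep$ for infinitely many $n \in A$. For each fixed $n$ the map $b \mapsto \| [a_n, b] \|$ is strong-$*$ lower semicontinuous, so the defining condition is Borel in $(A, b)$; projecting out $b \in U$ and taking a countable union over rational $\ep$ shows that $\bbE$ is analytic. Applying item~\eqref{T.S.1.3} yields $A \in \cV$ with either $[A]^{\infty} \subseteq \bbE$ or $[A]^{\infty} \cap \bbE = \varnothing$. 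In the second case $A$ itself is good, that is $\lim_{n \in A} \| [a_n, b] \| = 0$ for every $b \in U$, which is exactly what is required.

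The hard part will be to rule out the homogeneously bad alternative $[A]^{\infty} \subseteq \bbE$, and this is where the hypothesis $\bfa \in M' \cap M^{\cV}$ must be fed back in: it guarantees that for \emph{each} fixed $b$ the set $\{ n \colon \| [a_n, b] \| \geq \ep \}$ is \emph{not} in $\cV$. The plan is to show that if every infinite subset of $A$ carried a witness to non-commutation, then, using weak-operator compactness and metrizability of $U$ together with lower semicontinuity of the norm, one could extract a single $b^{*} \in U$ and an $\ep > 0$ for which $\{ n \colon \| [a_n, b^{*}] \| \geq \ep \}$ meets $\cV$, contradicting the hypothesis. Making this extraction precise, so that the limiting witness $b^{*}$ inherits a genuine lower bound on its commutators along a set in $\cV$, is the delicate point of the argument, and it is precisely here that selectivity, rather than merely the P-point property, is essential.
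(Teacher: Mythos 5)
Your overall strategy---realize the unit ball of $M$ as a Polish space, code the obstruction to centrality as an analytic subset of $[\bbN]^{\infty}$ by projecting out the witness $b$, and apply the analytic Ramsey property \eqref{T.S.1.3} of Theorem~\ref{T.S.1}---is exactly the paper's, and your treatment of the easy direction and of the ``good'' alternative is fine. The gap is in the ``bad'' alternative, and it is caused by your definition of $\bbE$: you declare $A$ bad if some $b$ and $\ep$ satisfy $\| [a_n, b] \| \geq \ep$ for \emph{infinitely many} $n \in A$. With that definition, homogeneity $[A]^{\infty} \subseteq \bbE$ only provides, for each infinite $Y \subseteq A$, a witness $b_Y$ working on some infinite but possibly $\cV$-small subset of $Y$, and the compactness extraction you defer to is not available: to pass a lower bound $\| [a_n, b_k] \| \geq \ep$ to a weak-operator (or strong-$*$) limit point $b^*$ of the witnesses you would need \emph{upper} semicontinuity of the commutator norm in $b$, whereas the operator norm is only \emph{lower} semicontinuous in these topologies (a net of norm-one witnesses can perfectly well converge weakly to $0$). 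So the step you flag as ``the delicate point'' is not merely delicate; as formulated it fails.

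The repair is to change the quantifier in $\bbE$, as the paper does: put
\[
\bbE = \big\{ X \in [\bbN]^{\infty} \colon (\exists j \in \bbN)(\exists b \in {\mathbf{B}})(\forall n \in X)\ \| [a_n, b] \| > 1/j \big\},
\]
where ${\mathbf{B}}$ is the unit ball of $M$. This set is still analytic by essentially your argument (the paper verifies it via Borel maps $\Phi_{\e}(b,X) = \{ n \in X \colon \|[a_n,b]\| > \e \}$, using the weak operator topology on ${\mathbf{B}}$ rather than strong-$*$; either works). Now the bad alternative refutes itself with no limit point needed: if $[X]^{\infty} \subseteq \bbE$ then in particular $X \in \bbE$, so a \emph{single} $b$ and $j$ satisfy $\| [a_n, b] \| > 1/j$ for \emph{every} $n \in X$, and since $X \in \cV$ this gives $\lim_{n \to \cV} \| [a_n, b] \| \geq 1/j$, contradicting $\bfa \in M' \cap M^{\cV}$ directly. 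The good alternative still delivers what you want, because for each $b$ and $j$ any infinite subset of $\{ n \in X \colon \| [a_n, b] \| > 1/j \}$ would itself lie in $\bbE$, so all these sets are finite and the sequence modified off $X$ is central. In short: your architecture is right, but the single universal quantifier ``$\forall n \in X$'' in the definition of $\bbE$ is the idea that makes the dichotomy close, and it is missing from your proposal.
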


\begin{proof}
We need only prove that (\ref{T.S.2.0:1}) implies~(\ref{T.S.2.0:2}).
Fix $\bfa \in M^{\cV}$ and a representing sequence $(a_n)_{n \in \bbN}$.

The closed unit ball ${\mathbf{B}}$ of $M$
with the weak operator topology is compact metric,
and is therefore a Polish space.
See page~35 of~\cite{Dx}.
For the convenience of the reader, we sketch the proof here.

We may assume $M \subseteq \BH$
for our fixed infinite dimensional separable Hilbert space~$H.$
Let $D \subseteq H$ be a countable norm dense subset,
and let $C \subseteq \C$ be a countable dense subset.
Observe that the weak operator topology on ${\mathbf{B}}$
has a base consisting of sets of the form
\[
\bigcap_{j = 1}^n \big\{ a \in {\mathbf{B}}
  \colon  | \langle a \xi_j, \et_j \rangle - \ld_j | < 1 / k_j \big\}
\]
with $n \in \N$ and
\[
\xi_1, \xi_2, \ldots, \xi_n, \et_1, \et_2, \ldots, \et_n \in D,
\,\,
\ld_1, \ld_2, \ldots, \ld_n \in C,
\,\, {\text{and}} \,\,
k_1, k_2, \ldots, k_n  \in \N.
\]
So ${\mathbf{B}}$
with the weak operator topology is second countable.
To see that it is a Polish space use the Banach-Alaoglu theorem
to conclude that it is, in fact, compact.

The power set $\cP (\bbN)$
with the Cantor set topology is compact and metrizable and,
hence, also a Polish space.

We will prove that the set
\[
\bbE = \big\{ X \in [ \bbN ]^{\infty}
    \colon ( \exists j \in \bbN )
	   (\exists b \in {\mathbf{B}} ) (\forall n \in X )
	\| [a_n, b] \| > 1/j \big\}
\]
is analytic. 	

For each $\e > 0$ define
$\Phi_{\e} \colon {\mathbf{B}} \times \cP (\bbN) \to \cP (\bbN)$ by
\[
\Phi_{\e} (b, X)
 = \{ n \in X\colon \| [a_n, b] \| > \e \}
\]
for $b \in {\mathbf{B}}$ and $X \subseteq \bbN$.
We claim that
$\Phi_{\e}$ is Borel measurable.
First, since multiplication is separately weak operator continuous
and norm closed balls in $M$ are also weak operator
closed, for each fixed $n$ the set
\[
J_n
 = \{b \in {\mathbf{B}} \times \cP (\bbN) \colon
             \| [a_n, b] \| > \e \}
\]
is open in ${\mathbf{B}}$.
Now let $U \subseteq \cP (\bbN)$ be a basic open set,
that is, there are $N \in \bbN$ and
a subset $S \subseteq \{ 0, 1, \ldots, N \}$ such that $U$
consists of all $X \subseteq \N$
with $X \cap \{ 0, 1, \ldots, N \} = S$.
Then
\begin{multline*}
\Phi_{\e}^{-1} ( U ) =
\big\{ (b, X) \in {\mathbf{B}} \times \cP (\bbN)
     \colon S = X\cap \{ 0, 1, \ldots, N \} \big\} \\
      \cap \left(\bigcap_{n\in S} J_n \right) \cap \left(
     \bigcap_{n \in  \{ 0, 1, \ldots, N \} \setminus S}
             {\mathbf{B}} \setminus J_n \right),
\end{multline*}
establishing that  $\Phi_{\e}^{-1} ( U )$ is Borel.
This proves that $\Phi_{\e}$ is Borel measurable.

Define
$Z = \coprod_{j = 1}^{\infty} [{\mathbf{B}} \times \cP (\bbN)]$,
and define $\Ph \colon Z \to \cP (\bbN)$
by letting $\Ph = \Ph_{1/j}$ on the $j$ component of the disjoint union.
Since $M$ acts on a separable Hilbert space, $Z$ is a Polish
space. Clearly $\Ph$ is Borel.
The subset $[\bbN]^{\infty} \subseteq \cP (\bbN)$ is Borel,
since its complement is countable.
Therefore $\Ph^{-1} ( [\bbN]^{\infty} )$ is Borel and the set
\[
\bbE
 = \Ph \big( \Ph^{-1} ( [\bbN]^{\infty} ) \big)
 = [\bbN]^{\infty}
      \cap \bigcup_{j = 1}^{\infty} \Phi_{1/j}
           [{\mathbf{B}} \times \cP (\bbN)]
\]
is analytic.
Since $\cV$ is selective,
there is $X \in \cV$ such that $[X]^{\infty} \subseteq \bbE$ or
$[X]^{\infty} \cap \bbE=\varnothing$.

Let us first assume the second possibility applies.
Then for each $b\in {\mathbf{B}}$ and $j \in \bbN$ with $j > 0$,
the set $\{n \in X \colon \| [a_n, b] \| > 1/j \}$ is not in $\bbE$,
and therefore must be finite.
Let $a_n'=a_n$ if $n\in X$ and $a_n'=1$ if $n\notin X$.
Then $\{n \in \bbN \colon \| [a_n', b] \| > \ep \}$ is finite for all
$b \in {\mathbf{B}}$ and $\ep > 0$, so $(a'_n)_{n \in \bbN}$ is a
central sequence. It represents $\bfa$ since $X \in \cV$.

Now assume there is $X\in \cV$ such that $[X]^{\infty} \subseteq \bbE$.
In particular, there are $b \in {\mathbf{B}}$ and $j \in \bbN$
such that $\|[a_n,b]\|\geq 1/j$ for all $n \in X$;
therefore $\bfa \notin M' \cap M^{\cV}$.
\end{proof}

\begin{coro}\label{C.S.2}
Assume $M$ is a separably acting von Neumann algebra.
If $\cV$ is selective then the following are equivalent.
\begin{enumerate}
\item\label{C.S.2:1}
There exists a nontrivial central sequence in $M$.
\item\label{C.S.2:2}
There exists a nontrivial $\cV$-central sequence in $M$.
\end{enumerate}
\end{coro}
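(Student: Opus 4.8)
The plan is to translate both statements into assertions about whether the relative commutant $M' \cap M^{\cV}$ contains a nonscalar element, and then to play the two notions of centrality off against each other through Theorem~\ref{T.S.2.0}. The basic dictionary is this: a norm-bounded sequence $(a_n)_{n \in \bbN}$ is $\cV$-central \ifo\ the element $\bfa = (a_n)_{n \in \bbN} + c_{\cV}(M)$ it represents lies in $M' \cap M^{\cV}$, since in $M^{\cV}$ one has $\| [\bfa, b] \| = \lim_{n \to \cV} \| [a_n, b] \|$ for every $b \in M$. Moreover I claim that for such a sequence
\[
\lim_{n \to \cV} \inf_{\ld \in \bbC} \| a_n - \ld \|
 = \inf_{\ld \in \bbC} \| \bfa - \ld \|,
\]
so that the $\cV$-central sequence $(a_n)_{n \in \bbN}$ is trivial \ifo\ $\bfa \in \bbC \cdot 1$. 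The inequality $\leq$ is immediate; for $\geq$ I would choose $\ld_n$ nearly attaining each infimum, note that the $\ld_n$ are bounded because $(a_n)$ is, and let $\ld = \lim_{n \to \cV} \ld_n$, a genuine complex number by compactness of a closed disc. With this dictionary in hand, \eqref{C.S.2:2} is equivalent to $M' \cap M^{\cV} \neq \bbC$.

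For \eqref{C.S.2:1} implies \eqref{C.S.2:2} no hypothesis on $\cV$ is needed. Given a nontrivial central sequence $(a_n)_{n \in \bbN}$, the quantities $d_n = \inf_{\ld \in \bbC} \| a_n - \ld \|$ do not tend to $0$, so there are $\ep > 0$ and an infinite set $S = \{ s_0 < s_1 < \cdots \}$ with $d_n > \ep$ for all $n \in S$. The subsequence $(a_{s_k})_{k \in \bbN}$ is again central (a subsequence of a sequence of commutators tending to $0$ still tends to $0$), and $\inf_{\ld} \| a_{s_k} - \ld \| > \ep$ for every $k$, so its $\cV$-limit is at least $\ep$. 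Hence $(a_{s_k})_{k \in \bbN}$ is a nontrivial $\cV$-central sequence.

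The substance of the corollary is the reverse implication \eqref{C.S.2:2} implies \eqref{C.S.2:1}, and this is exactly where selectivity enters. Starting from a nontrivial $\cV$-central sequence $(a_n)_{n \in \bbN}$, the dictionary gives $\bfa \in M' \cap M^{\cV}$ with $\bfa \notin \bbC$. Because $\cV$ is selective and $M$ is separably acting, Theorem~\ref{T.S.2.0} applies and produces a representing sequence $(a_n')_{n \in \bbN}$ of $\bfa$ that is an \emph{honest} central sequence, that is, $\lim_{n \to \infty} \| [a_n', b] \| = 0$ for all $b$. Since $(a_n')_{n \in \bbN}$ and $(a_n)_{n \in \bbN}$ both represent $\bfa$, the displayed identity of the first paragraph yields $\lim_{n \to \cV} \inf_{\ld} \| a_n' - \ld \| = \inf_{\ld} \| \bfa - \ld \| > 0$. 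A positive $\cV$-limit forbids the ordinary limit from being $0$, so $(a_n')_{n \in \bbN}$ is nontrivial, completing the argument.

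The one real obstacle is the conversion, in the last paragraph, of a merely $\cV$-central sequence into a genuinely central one while keeping the same element of the relative commutant; everything else is bookkeeping. That conversion is precisely the content of Theorem~\ref{T.S.2.0}, whose proof is where the Ramsey property of a selective $\cV$ is spent, so here I would simply invoke it. A minor point to watch is the compactness step justifying the displayed distance formula, which requires passing the infimizing scalars $\ld_n$ through the ultralimit.
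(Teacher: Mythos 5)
Your proposal is correct and follows essentially the same route as the paper: the forward implication by passing to a subsequence on which the distances to scalars are bounded below, and the reverse implication by invoking Theorem~\ref{T.S.2.0} to upgrade a $\cV$-central representing sequence to a genuinely central one. The only difference is that you spell out the dictionary between nontrivial $\cV$-central sequences and nonscalar elements of $M' \cap M^{\cV}$ (including the ultralimit of the infimizing scalars), which the paper leaves implicit in the phrase ``immediate consequence.''
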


\begin{proof}
Assume $(a_n)_{n \in \bbN}$ is a nontrivial central sequence.
By passing to a subsequence, we
may assume there is  $\e>0$
such that $\inf_{\ld \in \bbC} \|a_n - \ld\| \geq \e$ for all $n$.
This sequence is clearly a nontrivial $\cV$-central sequence
for any
nonprincipal ultrafilter $\cV$.
The converse implication is an immediate
consequence of Theorem~\ref{T.S.2.0}.
\end{proof}

Let $M$ be a type II$_1$ factor with (unique) tracial state~$\ta$.
Let $\|\cdot\|_2$ be the standard
$L^2$-norm on $M$, defined by $\| a \|_2 = \sqrt{\tau (a^*a)}$.
A bounded sequence $(a_n)$ in $M$ is {\emph{tracially central}}
if $\lim_n \|[b,a_n]\|_2=0$ for every $b\in M$.
(This, rather than that of Definition~\ref{D:Central},
is the usual definition of a central sequence in this context.)
The {\emph{tracial ultrapower}} of a II$_1$
factor is the ultrapower of the metric structure $(M, \, \|\cdot\|_2)$
in the sense of~\cite{BYBHU}.
Analogues of Theorem~\ref{T.S.2.0} and
Corollary~\ref{C.S.2}
for tracial ultrapowers of II$_1$ factors
could be proved by mimicking the
above proofs.
However, in this context the assumption that $\cV$ is a
selective
ultrafilter can be dropped.
In fact, for every nonprincipal ultrafilter $\cV$ on $\bbN$,
the commutant of $M$ in $M^{\cV}$ is trivial
if and only if $M$ has no
nontrivial central sequences.
By an observation of McDuff
(\cite[remark after Lemma~5]{McDuff:Central})
this follows by a diagonalization argument from the fact that the metric
$\|\cdot\|_2$ on a II$_1$ factor is separable.
Similarly, the analogues of Theorem~\ref{T.S.2.0}
and Corollary~\ref{C.S.2}
hold when  $M$ is a separable C*-algebra
and $\cV$ is any nonprincipal
ultrafilter on~$\bbN$.

On the other hand,
Theorem~\ref{T1} and Theorem~\ref{T.S.3} below
imply that some assumption on $\cV$
in Theorem~\ref{T.S.2.0} and Corollary~\ref{C.S.2}
is necessary in the case
when $M=\BH$.

\section{No nontrivial central sequences}

Let $M$ be a von Neumann algebra with center ${\mathcal{Z}} (M)$.
In \cite[Proposition~3.1]{She:Divisible}, David Sherman proved that
a sequence $(a_n)_{n\in \bbN}$ in $M$ is norm central
if and only if
$\lim_{n\to \infty} \inf_{z\in {\mathcal{Z}} (M)} \| a_n - z \| = 0$.
In particular,
if $M$ is a factor then it has no nontrivial central sequences.
Note that we are referring to {\textbf{norm}} central sequences,
even if $M$ is a II$_1$-factor.
For the reader's convenience we provide a self-contained proof of the
special case of Sherman's result
needed in the proof of Theorem~\ref{T2}.

\begin{thm}\label{T.S.3}
There are no nontrivial central sequences in $\BH$.
\end{thm}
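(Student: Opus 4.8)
The plan is to argue by contradiction, turning a hypothetical nontrivial central sequence into a \emph{single} fixed operator $b$ for which $\|[a_n,b]\|\not\to 0$. The entire difficulty is one of uniformity: centrality only gives $\|[a_n,b]\|\to 0$ for each \emph{fixed} $b$, whereas nontriviality is witnessed by operators $b_n$ depending on $n$, so the content lies in manufacturing one $b$ that works for infinitely many $n$ at once. So suppose $(a_n)$ is central and not trivial. After rescaling I may assume $\|a_n\|\le 1$, and after passing to a subsequence (still central, so contradicting centrality along it suffices) I may assume $\inf_{\ld\in\bbC}\|a_n-\ld\|\ge\ep$ for some fixed $\ep>0$. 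Writing $a_n=x_n+iy_n$ with $x_n,y_n$ self-adjoint, each is central (since $\|[a_n^*,b]\|=\|[a_n,b^*]\|\to 0$), and from $\inf_\ld\|a_n-\ld\|\le\inf_{s\in\R}\|x_n-s\|+\inf_{t\in\R}\|y_n-t\|$ one of the two self-adjoint parts stays at distance $\ge\ep/2$ from the scalars along a subsequence. Thus I may assume the $a_n$ are self-adjoint with $\inf_{s\in\R}\|a_n-s\|\ge\ep/2$. Finally, the closed unit ball of $\BH$ is weak operator compact and metrizable (as in the proof of Theorem~\ref{T.S.2.0}), so I pass to a subsequence with $a_n\to a$ in the weak operator topology; separate weak operator continuity of multiplication gives $[a,b]=0$ for all $b$, whence $a\in\bbC$, and subtracting this (real) scalar—which changes neither the commutators nor the distances to $\bbC$—lets me assume $a_n\to 0$ weakly. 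In particular $\langle a_n v,v\rangle\to 0$ for every fixed $v\in H$.

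For self-adjoint $a_n$ one has $\inf_{s}\|a_n-s\|=\tfrac12\operatorname{diam}\spec(a_n)$, so there are $\af_n<\bt_n$ in $\spec(a_n)$ with $\bt_n-\af_n\ge\ep$. The spectral projections $\chi_{[\af_n,\af_n+\ep/8]}(a_n)$ and $\chi_{[\bt_n-\ep/8,\bt_n]}(a_n)$ are nonzero (their intervals meet $\spec(a_n)$), so I may choose unit vectors $\xi_n,\eta_n$ in their ranges; these satisfy $\langle\xi_n,\eta_n\rangle=0$, $\|a_n\xi_n-\af_n\xi_n\|\le\ep/8$, and $\|a_n\eta_n-\bt_n\eta_n\|\le\ep/8$. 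Passing to a further subsequence, $\xi_n\to\xi_\infty$ and $\eta_n\to\eta_\infty$ weakly, and $\af_n\to\af$, $\bt_n\to\bt$ with $\bt-\af\ge\ep$. The proof now splits according to whether these approximate eigenvectors escape to infinity or concentrate, and the concentration alternative is the main obstacle.

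If $\xi_\infty=\eta_\infty=0$, the unit vectors are weakly null, so all inner products $\langle\xi_n,\xi_m\rangle,\langle\eta_n,\eta_m\rangle,\langle\xi_n,\eta_m\rangle$ tend to $0$, and a standard diagonal extraction produces an infinite set of indices along which $\{\xi_n,\eta_n\}$ is as close to orthonormal as desired. On the closed span of this system define a contraction $b$ by $b\xi_n=\eta_n$, $b\eta_n=0$, and $b=0$ on the orthogonal complement. Then $[a_n,b]\xi_n=a_n\eta_n-b(a_n\xi_n)$, which by the two approximate eigenvalue relations is within $\ep/2$ of $\bt_n\eta_n-\af_n\eta_n=(\bt_n-\af_n)\eta_n$; hence $\|[a_n,b]\|\ge(\bt_n-\af_n)-\ep/2\ge\ep/2$ for all these $n$ (the almost-orthonormality contributing only an arbitrarily small further error), contradicting $\|[a_n,b]\|\to 0$.

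It remains to treat the concentration case, say $\xi_\infty\neq 0$ (the case $\eta_\infty\neq 0$ is symmetric). Put $w=\xi_\infty/\|\xi_\infty\|$ and take the rank-one projection $b=w\otimes w$, using the convention $(x\otimes y)\zeta=\langle\zeta,y\rangle x$. A direct computation gives $[a_n,b]=(a_nw)\otimes w-w\otimes(a_nw)$, and writing $a_nw=\langle a_nw,w\rangle w+r_n$ with $r_n\perp w$ yields $\|[a_n,b]\|=\|r_n\|$, the norm of the component of $a_nw$ orthogonal to $w$. If this did not stay bounded below, then $r_n\to 0$ and (since $\langle a_nw,w\rangle\to 0$ as $w$ is fixed and $a_n\to 0$ weakly) $a_nw\to 0$ in norm, i.e.\ $a_n\xi_\infty\to 0$; combining this with $a_n\xi_n\approx\af_n\xi_n$ and $\xi_n\to\xi_\infty$ weakly and pairing against $\xi_\infty$ forces $\af=0$. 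Thus this fixed finite-rank $b$ already contradicts centrality \emph{unless} $\af=0$; running the symmetric argument at the top end shows the analogous detector works unless $\bt=0$. Since $\bt-\af\ge\ep$ the two ends cannot both degenerate, so the non-degenerate end yields either a working rank-one detector or—after reducing to a weakly null approximate eigenvector at that end—feeds back into the construction of the third paragraph. The delicate point, which I expect to be the crux of the whole argument, is exactly this concentration bookkeeping: verifying that the spectral gap $\bt-\af\ge\ep$ always supplies a single fixed operator (finite-rank, or a block partial isometry on a weakly null pair) detecting noncentrality, no matter how the approximate eigenvectors distribute their weak limits. In every case a fixed $b$ with $\|[a_n,b]\|\not\to 0$ is produced, contradicting centrality and proving the theorem.
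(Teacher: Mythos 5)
Your overall strategy is sound and genuinely different from the paper's, but the case analysis does not close, and the place where it fails is exactly the spot you flag as "the crux": the mixed concentration case. Suppose $\xi_\infty \neq 0$, the rank-one detector at $w = \xi_\infty/\|\xi_\infty\|$ fails (so $a_n w \to 0$ in norm and the bottom approximate eigenvalue degenerates), while $\eta_\infty = 0$ and $\bt \neq 0$. Your third-paragraph construction of the block partial isometry $b\xi_n = \eta_n$ requires \emph{both} families to be weakly null, since it is the almost-orthonormality of the combined system $\{\xi_n\}\cup\{\eta_n\}$ that makes $b$ a bounded operator and keeps the error terms small; here only the $\eta_n$ are weakly null, so "feeds back into the construction of the third paragraph" is not a valid move as stated. (A concrete instance: $a_n = \bt P_n$ with $P_n$ the projection onto $\overline{\operatorname{span}}\{\xi_k : k \geq n\}$ and $\xi_n$ chosen to be the fixed vector $\xi_0$; your detector at $\xi_0$ vanishes identically and your $\eta_n$ carry all the information.) The gap is repairable but needs a genuinely new step: one must produce a second weakly null, almost orthonormal family $\zeta_n$ with $\|(a_n - \bt_n)\zeta_n\|$ bounded below and then set $b\eta_n = \zeta_n$. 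This amounts to showing that the spectral projections $1 - \chi_{[\bt_n - c,\, \bt_n + c]}(a_n)$ have rank tending to infinity, which does follow from $a_n \to 0$ weakly and $\bt \neq 0$ (bounded-rank complements would force a bounded-rank perturbation of $\bt_n \cdot 1$ to converge weakly to a nonzero scalar, which is impossible), but none of this is in your write-up. A secondary, fixable slip: the failure of the rank-one detector gives only $|\af| \leq \ep/(8\|\xi_\infty\|)$, not $\af = 0$, which is useless when $\|\xi_\infty\|$ is small; you should take spectral windows of width tending to $0$ (say $1/n$) rather than fixed width $\ep/8$.

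For comparison, the paper avoids weak limits of approximate eigenvectors altogether. It fixes an increasing sequence of finite-rank projections $r_n \to 1$ and runs a trichotomy: either some compression $r_n a_j r_n$ stays at a definite distance from the scalars (then a norm-convergent subsequence plus a rank-one partial isometry in the finite-dimensional corner, via Lemma~\ref{L:6}, gives the detector), or some off-diagonal corner $r_n a_j (1-r_n)$ stays large (then $b = r_n$ itself works), or else the sequence is asymptotically supported on disjoint finite-dimensional blocks $r_{n(j+1)} - r_{n(j)}$, each block staying $\tfrac14\dt$ away from scalars by Lemma~\ref{L:DistC}; one then applies Lemma~\ref{L:6} in each block and sums the resulting partial isometries into a single strongly convergent $b = \sum_j s_j$. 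That block-diagonal summation is the paper's substitute for your weakly-null dichotomy, and it sidesteps the mixed case entirely because the blocks are chosen \emph{after} the subsequence, pairwise orthogonal by construction. Your approach could be made to work, but as written the concentration bookkeeping you identify as delicate is in fact incomplete rather than merely delicate.
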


The proof of Theorem~\ref{T.S.3} will be given after two lemmas.

\begin{lem}\label{L:6}
Let  $c \in \BH$ be a selfadjoint operator that is not a scalar.
Set $\dt = \inf_{\ld \in \C} \| c - \ld \|$.
Then $\dt > 0$,
and for every $\ep > 0$ there is a rank one partial
isometry $s \in \BH$ such that $\| [ s, c ] \| > 2 \dt - \ep$.
\end{lem}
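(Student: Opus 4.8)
The plan is to convert $\dt$ into spectral data and then build $s$ from approximate eigenvectors at the two ends of the spectrum of $c$. Since $c$ is selfadjoint, $\spec(c)$ is a compact subset of $\R$; write $m = \min \spec(c)$ and $M = \max \spec(c)$, and note $m < M$ because otherwise the spectral theorem forces $c = m \cdot 1$, contradicting that $c$ is not a scalar. First I would check that the infimum defining $\dt$ is attained at a real $\ld$: for any complex $\ld$ one has $(c - \ld)^*(c - \ld) = (c - \operatorname{Re}\ld)^2 + (\operatorname{Im}\ld)^2 \geq (c - \operatorname{Re}\ld)^2$, so $\|c - \ld\| \geq \|c - \operatorname{Re}\ld\|$, while for real $\ld$ selfadjointness gives $\|c - \ld\| = \max_{t \in \spec(c)} |t - \ld| = \max(|M - \ld|, |m - \ld|)$, which is minimized at $\ld = (M + m)/2$ with value $(M - m)/2$. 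Hence $\dt = (M - m)/2$, so $\dt > 0$ and $2\dt = M - m$.

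Next I would produce, for a parameter $\ep' > 0$ with $\ep' < \dt$, an orthonormal pair of approximate eigenvectors. Let $E$ be the spectral measure of $c$. Since $M = \max\spec(c)$ and $m = \min\spec(c)$, the projections $p = E((M - \ep', M])$ and $q = E([m, m + \ep'))$ are nonzero; and since $\ep' < \dt$ the intervals are disjoint, so $p$ and $q$ are orthogonal. Choosing unit vectors $\xi \in pH$ and $\et \in qH$, we get $\langle \xi, \et \rangle = 0$, and by the functional calculus $\|(c - M)\xi\| \leq \ep'$ and $\|(c - m)\et\| \leq \ep'$. Routing the construction through disjoint spectral projections delivers the two approximate eigenvectors \emph{and} their exact orthogonality at once, avoiding any separate perturbation step.

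Finally I would let $s$ be the rank one partial isometry sending $\xi$ to $\et$ and annihilating $\{\xi\}^{\perp}$, so that $\|s\| = 1$. Then $[s, c]\xi = s(c\xi) - c(s\xi) = s(c\xi) - c\et$, and writing $c\xi = M\xi + (c - M)\xi$ together with $\|s\| \leq 1$ gives $\|s(c\xi) - M\et\| \leq \ep'$, while $\|c\et - m\et\| \leq \ep'$; hence $[s,c]\xi$ lies within $2\ep'$ of $(M - m)\et$, so $\|[s,c]\| \geq \|[s,c]\xi\| \geq (M - m) - 2\ep' = 2\dt - 2\ep'$. We may assume $\ep < 2\dt$ (otherwise $2\dt - \ep \leq 0$ and any $s$ with $[s,c] \neq 0$ works), and then taking $\ep' = \ep/2 < \dt$ finishes the proof. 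The only real obstacle is the bookkeeping at the two ends of the spectrum: one must secure orthogonal approximate eigenvectors, and using disjoint spectral projections is exactly what makes the commutator estimate transparent, since then $[s,c]$ acts on $\xi$ essentially as multiplication by $M - m = 2\dt$.
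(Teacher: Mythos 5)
Your argument is correct and, apart from one cosmetic point, complete; but it takes a genuinely different route from the paper's. The paper also starts from $\ld_1=\min\spec(c)$, $\ld_2=\max\spec(c)$ and $\dt=\tfrac12(\ld_2-\ld_1)$, but it then picks two \emph{arbitrary} unit approximate eigenvectors $\xi_1,\et$ for $\ld_1,\ld_2$, which need not be orthogonal, and manufactures orthogonality by a Gram--Schmidt step $\xi_2=\|\mu\|^{-1}\mu$ with $\mu=\et-\langle\et,\xi_1\rangle\xi_1$; the bulk of that proof is the estimate showing $\langle\xi_1,\et\rangle$ is small (of order $\ep_1\dt^{-1}$) and hence that $\xi_2$ is still an approximate eigenvector for $\ld_2$. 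You instead take your approximate eigenvectors from the ranges of the spectral projections $E((M-\ep',M])$ and $E([m,m+\ep'))$, which are automatically orthogonal once $\ep'<\dt$, so the entire perturbation analysis disappears and the commutator estimate $\|[s,c]\xi\|\geq (M-m)-2\ep'$ is immediate. What your approach buys is brevity and transparency; what the paper's buys is independence from the spectral measure (it only needs that endpoints of the spectrum are approximate eigenvalues), which is why its constants are messier. One small repair: with $\ep'=\ep/2$ you only obtain $\|[s,c]\|\geq 2\dt-\ep$, whereas the lemma asserts the strict inequality $>2\dt-\ep$; take, say, $\ep'=\ep/4$ (still requiring $\ep'<\dt$) to get $\|[s,c]\|\geq 2\dt-\ep/2>2\dt-\ep$. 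Your reduction of the infimum over $\C$ to an infimum over $\R$, and the resulting identity $\dt=\tfrac12(M-m)$, is a correct elaboration of what the paper asserts via the isomorphism with $C(\spec(c))$.
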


\begin{proof}
We denote the spectrum of $c$ by $\spec (c)$.
Set $\ld_1 = \inf (\spec (c))$ and $\ld_2 = \sup (\spec (c))$.
By considering the isomorphism between $C (\spec (c))$
and the C*-subalgebra of $\BH$ generated by $c$ and $1$, we
see that $\dt = \frac{1}{2} (\ld_2 - \ld_1) > 0$.
Set
\[
\ep_0 = \frac{\ep}{4 \| c \| + 2}.
\]
Choose $\ep_1 > 0$ so small that
if $\af \in \R$ satisfies $| \af - 1 | < \ep_1 \dt^{-1}$,
then $| \af^{-1} - 1 | < \ep_0$.
We also require $\ep_1 \leq \min (\ep_0, \dt \ep_0)$.

Choose $\xi_1, \et \in H$ with
\[
\| \xi_1 \| = \| \et \| = 1,
\,\,\,\,\,\,
\| c \xi_1 - \ld_1 \xi_1 \| < \ep_1,
\andeqn \| c \et - \ld_2 \et \| < \ep_1.
\]
Set
\[
\mu = \et - \langle \et, \xi_1 \rangle \xi_1
\andeqn
\xi_2 = \| \mu \|^{-1} \mu.
\]
Then
$\| \xi_2 \| = 1$ and $\langle \xi_1, \xi_2 \rangle = 0$.

We need to
estimate $\|c\xi_2-\lambda_2\xi_2\|$.
We begin as follows, using
$\langle \xi_1, c \et \rangle = \langle c \xi_1, \et \rangle$
at the second step:
\begin{align*}
| (\ld_2 - \ld_1) \langle \xi_1, \et \rangle |
& = | \langle \xi_1, \ld_2 \et \rangle
          - \langle \ld_1 \xi_1, \et \rangle |
      \\
& = | \langle \xi_1, \, \lambda_2\et - c\et \rangle
          + \langle c\xi_1-\lambda_1\xi_1, \, \et\rangle|
      \\
& \leq \| \xi_1 \| \cdot \| \ld_2 \et - c \et \|
          + \| \ld_1 \xi_1 - c \xi_1 \| \cdot \| \et \|
   < 2 \ep_1.
\end{align*}
It follows that
\[
\| \mu - \et \|=|\langle \xi_1, \et \rangle|
   < \frac{2 \ep_1}{\ld_2 - \ld_1} = \ep_1 \dt^{-1},
\]
so $\big| 1 - \| \mu \| \big| < \ep_1 \dt^{-1}$, and
\[
\| \xi_2 - \et \|
   \leq \big\| \| \mu \|^{-1} \mu - \mu \big\| + \| \mu - \et \|
   < \ep_0 + \ep_1 \dt^{-1}
   \leq 2 \ep_0.
\]
Therefore
\[
\| c \xi_2 - \ld_2 \xi_2 \|
   < 2 \ep_0 \| c \| + \| c \et - \ld_2 \et \| + 2 \ep_0 | \ld_2 |
   \leq (4 \| c \| + 1 ) \ep_0.
\]

Now let $s \in \BH$ be the partial isometry
such that $s \xi_1 = \xi_2$
and $s \xi = 0$ whenever $\langle \xi, \xi_1 \rangle = 0$.
Then
\[
\| s c \xi_1 - \ld_1 \xi_2 \|
   \leq \| s \| \cdot \| c \xi_1 - \ld_1 \xi_1 \|
   < \ep_1
   \leq \ep_0.
\]
So
\begin{align*}
\| [s, c] \| & \geq \| s c \xi_1 - c s \xi_1 \|
   = \| s c \xi_1 - c \xi_2 \|
   \\
 & \geq (\ld_2 - \ld_1)
         - \| \ld_1 \xi_2 - s c \xi_1 \| - \| \ld_2 \xi_2 - c \xi_2 \|
   \\
 & > 2 \dt - \ep_0 - (4 \| c \| + 1 ) \ep_0
   \geq 2 \dt - \ep.
\end{align*}
This completes the proof.
\end{proof}

\begin{lem}\label{L:DistC}
Let $A$ be a unital \ca\  and let $a \in A$.
Then
\[
\inf_{\ld \in \C} \| a - \ld \|
   \geq \tfrac{1}{2} ( \| a \| - \| p a p \| )
\]
for every nonzero projection $p$ in $A$.
\end{lem}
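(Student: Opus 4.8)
The plan is to establish, for a fixed $\ld \in \C$ and a fixed nonzero projection $p$, the pointwise estimate
\[
\| a - \ld \| \geq \tfrac{1}{2} \big( \| a \| - \| p a p \| \big),
\]
and then take the infimum over $\ld$. The entire argument amounts to combining two complementary triangle inequalities that together pin down the size of the unknown scalar~$\ld$: one showing $\ld$ cannot be too small, the other bounding it from above via compression by~$p$.

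First I would bound $\| a \|$ from above in the obvious way. The triangle inequality gives $\| a \| \leq \| a - \ld \| + | \ld |$, so that
\[
\| a \| - | \ld | \leq \| a - \ld \|.
\]
Next I would compress by $p$ to control $| \ld |$ from above. Since $p$ is a nonzero projection we have $\| p \| = 1$, hence $\| \ld p \| = | \ld |$, while $\| p (a - \ld) p \| \leq \| a - \ld \|$. Using the identity $\ld p = p a p - p (a - \ld) p$ together with the triangle inequality,
\[
| \ld | = \| \ld p \|
   \leq \| p a p \| + \| p (a - \ld) p \|
   \leq \| p a p \| + \| a - \ld \|.
\]

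Finally I would add the two displayed inequalities so that the term $| \ld |$ cancels, obtaining $\| a \| \leq 2 \| a - \ld \| + \| p a p \|$, which rearranges to the desired pointwise estimate; taking the infimum over $\ld \in \C$ then completes the proof. I do not expect any genuine obstacle here. The only point worth isolating is the compression identity $p (a - \ld) p = p a p - \ld p$, which is what lets the single quantity $\| a - \ld \|$ simultaneously witness that $\ld$ cannot be too small (from the first inequality) and that $| \ld |$ cannot be too large (from the second), so that summing the two estimates eliminates the scalar entirely.
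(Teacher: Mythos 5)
Your proposal is correct and is essentially the paper's own argument: both proofs combine the inequality $\| a - \ld \| \geq \| a \| - | \ld |$ with the compression estimate $\| a - \ld \| \geq \| p(a-\ld)p \| \geq | \ld | - \| p a p \|$ (using $p \neq 0$ so that $\| \ld p \| = | \ld |$), and then eliminate $| \ld |$ by summing. The only cosmetic difference is that the paper averages the two inequalities while you add them and divide at the end.
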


\begin{proof}
Let $\ld \in \C$.
We have
\[
\| a - \ld \| \geq \| a \| - | \ld |.
\]
Also, using $p \neq 0$ at the third step,
\[
\| a - \ld \|
  \geq \| p (a - \ld) p \|
  = \| p a p - \ld p \|
  \geq | \ld | - \| p a p \|.
\]
Therefore
\[
\| a - \ld \|
  \geq \tfrac{1}{2} ( \| a \| - | \ld | )
         + \tfrac{1}{2} ( | \ld | - \| p a p \| )
  = \tfrac{1}{2} ( \| a \| - \| p a p \| ),
\]
as desired.
\end{proof}

\begin{proof}[Proof of Theorem~\ref{T.S.3}]
Using the decomposition of an operator~$a$
as $a = \frac{1}{2} ( a + a^* ) + \frac{1}{2} ( a - a^* ),$
with $\frac{1}{2} ( a + a^* )$ and $- \frac{i}{2} ( a - a^* )$
selfadjoint,
we need only prove that there are no nontrivial selfadjoint central
sequences.
Let $(a_j)_{j \in \N}$ be a norm bounded sequence of
selfadjoint elements of $\BH$ (not necessarily central) which is not
trivial in the sense of Definition~\ref{D:Central}. We prove that
$(a_j)_{j \in \N}$ is not central. It suffices to find a subsequence
which is not central. By passing to a subsequence, we may assume
\[
\inf_{j \in \N} \inf_{\ld \in \bbC} \| a_j - \ld \| > 0.
\]
In particular, our sequence $(a_j)_{j \in \N}$
has no subsequence
which converges in norm to any element of $\C \cdot 1$.

We will find  $\ep > 0$, an element $b \in \BH$,
and a subsequence $(a_{m (j)})_{j \in \N}$ of $(a_j)_{j \in \N}$,
such that $\| [ a_{m (j)}, \, b ] \| > \ep$ for all $j \in \N$.
This will show that $(a_j)_{j \in \N}$ is not central,
and prove the theorem.

Let $(r_n)_{n \in \N}$ be a strictly increasing
sequence of finite rank \pj s such that $\limi{n} r_n = 1$
in the strong
operator topology, and with $r_0 = 0$.

First suppose that there are $n \in \N$, a number $\dt > 0$,
and a subsequence
$(a_{l (j)})_{j \in \N}$ of $(a_j)_{j \in \N}$, such that
\[
\inf_{j \in \N} \inf_{\ld \in \C} \| r_n a_{l (j)} r_n - \ld r_n \|
   \geq \dt.
\]
Since $r_n$ has finite rank,
there is a further subsequence
$(a_{m (j)})_{j \in \N}$ of $(a_{l (j)})_{j \in \N}$
such that $c = \limi{j} r_n a_{m (j)} r_n$
exists.
Then also $\inf_{\ld \in \C} \| c - \ld r_n \| \geq \dt$.
Lemma~\ref{L:6} provides $s \in r_n \BH r_n$
such that $\| [s, c] \| > \frac{3}{2} \dt$.
Then
\[
\| [s, \, a_{m (j)}] \| = \| [s, \, r_n a_{m (j)} r_n ] \| > \dt
\]
for all sufficiently large~$j$.
Dropping initial terms of the
subsequence $(a_{m (j)})_{j \in \N}$,
we obtain the required subsequence with $b = s$ and $\ep = \dt$.

Accordingly, we may now assume that
\[
\lim_{j \to \I} \inf_{\ld \in \C} \| r_n a_j r_n - \ld r_n \| = 0
\]
for all $n \in \N$.

Set $M = \sup_{j \in \N} \| a_j \|$.
Then
\[
\lim_{j \to \I} \inf_{\ld \in [-M, \, M]} \| r_n a_j r_n - \ld r_n \|
    = 0
\]
for all $n \in \N$.
In particular, there are numbers $\ld_{0, j} \in [-M, \, M]$ such that
\[
\lim_{j \to \I} \| r_0 a_j r_0 - \ld_{0, j} r_0 \|
    = 0.
\]
By compactness, there are $\ld_0 \in [-M, \, M]$
and a strictly increasing
sequence $(l_0 (j))_{j \in \N}$
such that $\lim_{j \to \I} \ld_{0, l_0  (j)} = \ld_0$.
Then
\[
\lim_{j \to \I} \| r_0 a_j r_0 - \ld_0 r_0 \|
    = 0.
\]
Similarly, there are $\ld_{1, j} \in [-M, \, M]$ such that
\[
\lim_{j \to \I} \| r_1 a_{l_0 (j)} r_1 - \ld_{1, j} r_1 \|
    = 0,
\]
and then there are $\ld_1 \in [-M, \, M]$
and a subsequence $(l_1 (j))_{j \in \N}$ of $(l_0 (j))_{j \in \N}$
such that
\[
\lim_{j \to \I} \| r_1 a_{l_1 (j)} r_1 - \ld_1 r_1 \|
    = 0.
\]
We proceed inductively, obtaining numbers $\ld_n \in [-M, \, M]$ and
subsequences $(l_n (j))_{j \in \N}$ of $(l_{n - 1} (j))_{j \in \N}$
such that
\[
\lim_{j \to \I} \| r_n a_{l_n (j)} r_n - \ld_n r_n \|
    = 0.
\]
Setting $l (j) = l_j (j)$, we then get
\[
\lim_{j \to \I} \| r_n a_{l (j)} r_n - \ld_n r_n \|
    = 0
\]
for all $n \in \N$.

Clearly $\ld_0 = \ld_1 = \cdots$.
Subtracting this common value from each $a_j$
does not change the conditions on $(a_j)_{j \in \N}$
or the existence of the required subsequence,
so \wolog\  $\limi{j} \| r_n a_{l (j)} r_n \| = 0$ for all~$n$.

Suppose now that there is $n \in \N$
such that $\| r_n a_{l (j)} (1 - r_n) \|$ does
not converge to~$0$ as $j \to \I$.
Then there are $\rh > 0$ and a subsequence
$(a_{m (j)})_{j \in \N}$ of $(a_{l (j)})_{j \in \N}$
such that $\| r_n a_{m (j)} (1 - r_n) \| > \rh$ for all $j \in \N$.
With $b = r_n$, and using $r_n (1 -  r_n) = 0$ at the second step,
we have
\[
\| [ b, a_{m (j)} ] \|
   \geq \| r_n [ b, a_{m (j)} ] (1 - r_n) \|
   = \| r_n a_{m (j)} (1 - r_n) \|
   > \rh
\]
for all $j \in \N$.
Thus,
we have a subsequence $(a_{m (j)})_{j \in \N}$ of the required type
with $b = r_n$ and $\ep = \rh$.

Since $\| r_n a_j (1-r_n)\|=\|(1-r_n) a_j r_n \|$, we may assume
\wolog\ that
\[
\limi{j} \| r_n a_j (1 - r_n) \|
    = \limi{j} \| (1 - r_n) a_j r_n \| = 0
\]
for all $n \in \N$.
Combining these with
$\limi{j} \| r_n a_j r_n \| = 0$ for
all~$n$, we have reduced to consideration of the case
\[
\limi{j} \| a_j - (1 - r_n) a_j (1 - r_n) \| = 0
\]
for all $n \in \N$.

Since by assumption $(a_j)_{j \in \N}$
does not converge to $0$ in norm,
there are $\dt > 0$
and a subsequence $(a_{l (j)})_{j \in \N}$ of $(a_j)_{j \in \N}$
such that $\inf_{j \in \N} \| a_{l (j)} \| \geq \dt$.
Passing to this subsequence,
without loss of generality $\inf_{j \in \N} \| a_j \| \geq \dt$.
We now construct recursively a
subsequence $(a_{m (j)})_{j \in \N}$ of $(a_j)_{j \in \N}$,
and an increasing
sequence $(n (j))_{j \in \N}$ with $n (0) = 0$, such that the elements
\begin{equation}\label{Eq:yj}
y_j = \big( r_{n (j + 1)} - r_{n (j)} \big) a_{m (j)}
    \big( r_{n (j + 1)} - r_{n (j)} \big)
\end{equation}
satisfy
\[
\inf_{\ld \in \C}  \big\| y_j - \ld (r_{n (j + 1)} - r_{n (j)})  \big\|
   > \tfrac{1}{4} \dt
\]
for all $j \in \N$.
We repeatedly use the observation that
$\limi{n} \| r_n x r_n \| = \| x \|$ for all $x \in \BH$.

Begin by choosing $n (0) = 0$, so that $r_{n (0)} = 0$.
Choose $m (0) \in \N$
such that $\| r_1 a_{m (0)} r_1 \| < \tfrac{1}{4} \dt$.
Now use $\| a_{m (0)} \| \geq \dt$
to choose $n (1)$ so large that the element
$y_0 = r_{n (1)} a_{m (0)} r_{n (1)}$
satisfies $\| y_0 \| > \tfrac{3}{4} \dt$.
Apply
Lemma~\ref{L:DistC} with $A = r_{n (1)} \BH r_{n (1)}$,
with $a = y_0$, and
with $p = r_1$, to get
\[
\inf_{\ld \in \C} \| y_0 - \ld r_{n (1)} \|
   > \tfrac{1}{2} \left( \tfrac{3}{4} \dt - \tfrac{1}{4} \dt \right)
   = \tfrac{1}{4} \dt.
\]

Given $m (j)$ and $n (j + 1)$,
choose $m (j + 1) > m (j)$ so large that,
with
\[
x = \big( 1 - r_{n (j + 1) + 1} \big)
    a_{m (j + 1)} \big( 1 - r_{n (j + 1) + 1} \big),
\]
we have $\| a_{m (j + 1)} - x \| < \tfrac{1}{6} \dt$.
Then choose $n (j + 2)$
so large that
\[
\| r_{n (j + 2)} x r_{n (j + 2)} \|
  > \| x \| - \tfrac{1}{6} \dt.
\]
Set
\[
y_{j + 1}
  = \big( r_{n (j + 2)} - r_{n (j + 1)} \big) a_{m (j + 1)}
          \big( r_{n (j + 2)} - r_{n (j + 1)} \big).
\]
Then
\begin{align*}
\| y_{j + 1} \|
 & \geq \big\| \big( r_{n (j + 2)} - r_{n (j + 1) + 1} \big)
    a_{m (j + 1)} \big( r_{n (j + 2)} - r_{n (j + 1) + 1} \big) \big\|
      \\
 & = \| r_{n (j + 2)} x r_{n (j + 2)} \|
   > \| x \| - \tfrac{1}{6} \dt
   > \| a_{m (j + 1)} \| - \tfrac{1}{6} \dt - \tfrac{1}{6} \dt
   \geq \tfrac{2}{3} \dt.
\end{align*}
Also,
\[
\big\| r_{n (j + 1) + 1} y_{j + 1} r_{n (j + 1) + 1} \big\|
   \leq \big\| r_{n (j + 1) + 1} (a_{m (j + 1)} - x)
             r_{n (j + 1) + 1} \big\|
   < \tfrac{1}{6} \dt.
\]
Apply Lemma~\ref{L:DistC} with
\[
A = \big( r_{n (j + 2)} - r_{n (j + 1)} \big)
          \BH \big( r_{n (j + 2)} - r_{n (j + 1)} \big),
\]
with $a = y_{j + 1}$, and with $p = r_{n (j + 1) + 1} - r_{n (j + 1)}$,
to get
\[
\inf_{\ld \in \C}
        \big\| y_{j + 1} - \ld (r_{n (j + 2)} - r_{n (j + 1)} ) \big\|
   > \tfrac{1}{2} \left( \tfrac{2}{3} \dt - \tfrac{1}{6} \dt \right)
   = \tfrac{1}{4} \dt,
\]
as desired.
This completes the construction of $(a_{m (j)})_{j \in \N}$
and $(n (j))_{j \in \N}$.

Now let $y_j$ be as in~(\ref{Eq:yj}) for $j \in \N$.
Lemma~\ref{L:6} provides
\[
s_j
  \in \big( r_{n (j + 1)} - r_{n (j)} \big)
         \BH \big( r_{n (j + 1)} - r_{n (j)} \big)
\]
such that $\| [ s_j, y_j ] \| > \tfrac{1}{4} \dt$ and $\| s_j \| = 1$.
The
series $s = \sum_{j = 0}^{\I} s_j$
converges in the strong operator topology,
and for $j \in \N$ we have
\[
\| [s, a_{m (j)}] \|
   \geq \big\| \big( r_{n (j + 1)} - r_{n (j)} \big)
          [s, a_{m (j)}] \big( r_{n (j + 1)} - r_{n (j)} \big) \big\|
   = \| [ s_j, y_j ] \|
   > \tfrac{1}{4} \dt.
\]
Thus, the subsequence $(a_{m (j)})_{j \in \N}$
satisfies the required condition
with $b = s$ and $\ep = \tfrac{1}{4} \dt$.
\end{proof}

The following is an immediate consequence of Corollary~\ref{C.S.2} and
Theorem~\ref{T.S.3}.

\begin{coro} \label{C.S.1}
If $\cV$ is a selective ultrafilter then $F_{\cV} (\BH) = \bbC$. \qed
\end{coro}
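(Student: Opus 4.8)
The plan is to show directly that the relative commutant $\BH' \cap \BH^{\cV}$ reduces to the scalars; this suffices because $\BH$ is unital, so $F_{\cV}(\BH) = \BH' \cap \BH^{\cV}$. First I would combine the two cited results into the single fact that, for a selective $\cV$, the algebra $\BH$ admits no nontrivial $\cV$-central sequence: by Theorem~\ref{T.S.3} there is no nontrivial norm-central sequence in $\BH$, and by Corollary~\ref{C.S.2} the existence of a nontrivial $\cV$-central sequence is (for selective $\cV$) equivalent to the existence of a nontrivial central sequence. Hence every norm-bounded $\cV$-central sequence in $\BH$ is trivial in the sense of Definition~\ref{D:Central}.

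Next I would unwind the relative commutant at the level of representing sequences. Fix $\bfa \in \BH' \cap \BH^{\cV}$ and a representing sequence $(a_n)_{n \in \bbN}$. The requirement that $\bfa$ commute with the constant sequence coming from each $b \in \BH$ says exactly that $\lim_{n \to \cV} \|[a_n, b]\| = 0$ for every $b \in \BH$, i.e.\ that $(a_n)_{n \in \bbN}$ is a $\cV$-central sequence. By the fact recorded above, this sequence must be trivial, so $\lim_{n \to \cV} \inf_{\ld \in \bbC} \|a_n - \ld\| = 0$.

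Finally I would convert triviality of the representing sequence into the conclusion that $\bfa$ is a scalar. For each $n$ the continuous function $\ld \mapsto \|a_n - \ld\|$ tends to $\infty$ as $|\ld| \to \infty$, so it attains its infimum at some $\ld_n \in \bbC$, and the sequence $(\ld_n)_{n \in \bbN}$ is bounded because $(a_n)_{n \in \bbN}$ is. Then $\lim_{n \to \cV} \|a_n - \ld_n\| = 0$, so $\bfa$ is also represented by the bounded scalar sequence $(\ld_n)_{n \in \bbN}$. The $\cV$-limit $\ld = \lim_{n \to \cV} \ld_n$ exists in $\bbC$, and $\lim_{n \to \cV} |\ld_n - \ld| = 0$ then forces $\bfa = \ld \cdot 1$. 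Since $\bfa$ was arbitrary, $F_{\cV}(\BH) = \bbC$.

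The only point needing any care is this last extraction of a scalar from a trivial $\cV$-central representing sequence, and even that is routine once the infimum is seen to be attained by a bounded sequence of scalars. All the genuine content is already carried by Theorem~\ref{T.S.3} and by the selectivity argument underlying Corollary~\ref{C.S.2}; the corollary itself is just the bookkeeping that joins them.
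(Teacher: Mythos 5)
Your proposal is correct and follows the same route as the paper, which derives Corollary~\ref{C.S.1} directly as an immediate consequence of Theorem~\ref{T.S.3} and Corollary~\ref{C.S.2}; you have merely written out the routine bookkeeping (representing sequences of elements of $\BH' \cap \BH^{\cV}$ are exactly the $\cV$-central sequences, and a trivial $\cV$-central sequence is $\cV$-close to a bounded scalar sequence, whose $\cV$-limit gives the scalar) that the paper leaves implicit. No gaps.
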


\section{Flat ultrafilters}\label{Sec:2}

\begin{notation}\label{N:Up}
By $f\colon \bbN\nearrow\bbN$ we mean that $f$ is a strictly increasing
function from $\bbN$ to $\bbN$ such that $f(0)>0$.
\end{notation}

For such $f$ and
nonincreasing $h\colon \bbN\to [0,1]$
the assertion $\|h - h \circ f\|_{\infty}  \leq \e$
is equivalent to stating that the variation of $h$ on any
interval of the form $\N \cap [j, f (j)]$ is at most~$\e$.

\begin{definition}\label{D:Flat}
An ultrafilter $\cV$ on $\bbN$ is {\emph{flat}} if
there are nonincreasing functions
$h_n \colon \bbN \to [0, 1]$, for $n \in \N$, such that:
\begin{enumerate}
\item\label{D:Flat:1}
$h_n (0) = 1$ for all $n \in \N$.
\item\label{D:Flat:2}
$\lim_{j \to \infty} h_n (j) = 0$ for all $n \in \N$.
\item\label{D:Flat:3}
For every $f \colon \bbN \nearrow \bbN$,
we have
$\lim_{n \to \cV} \| h_n - h_n \circ f\|_{\infty} = 0$.
\end{enumerate}
\end{definition}

\begin{thm}\label{T.Flat.1}
Flat ultrafilters exist.
\end{thm}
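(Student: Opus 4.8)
The plan is to construct the witnessing sequence $(h_n)$ first, chosen so as to be compatible with a large filter, and then to obtain $\cV$ by a routine extension argument. For $f\colon\bbN\nearrow\bbN$ and $\e>0$ write
\[
A_{f,\e}=\{n\in\bbN\colon \|h_n-h_n\circ f\|_\infty<\e\}.
\]
Condition~(3) for a given $\cV$ says exactly that $A_{f,\e}\in\cV$ for all $f$ and all $\e>0$. So it suffices to produce functions $h_n$ satisfying (1) and (2) for which every $A_{f,\e}$ is infinite. First I would check that these sets, together with the cofinite sets, generate a nonprincipal filter. Since each $h_n$ is nonincreasing, $g\le f$ pointwise implies $A_{f,\e}\subseteq A_{g,\e}$, and trivially $A_{f,\e'}\subseteq A_{f,\e}$ when $\e'\le\e$. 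Hence for finitely many data $(f_i,\e_i)$, putting $f^\ast=\max_i f_i$ (still strictly increasing with $f^\ast(0)>0$) and $\e^\ast=\min_i\e_i$ gives $A_{f^\ast,\e^\ast}\subseteq\bigcap_i A_{f_i,\e_i}$. As each $A_{f^\ast,\e^\ast}$ is infinite, the finite intersection property holds, and any ultrafilter $\cV$ extending this filter is nonprincipal and flat.

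The heart of the matter is the construction of the $h_n$. The guiding observation is that, for a step function $h$, whether $h$ is flat within $\e$ with respect to $f$ --- equivalently, as remarked before Definition~\ref{D:Flat}, whether the variation of $h$ on each interval $[j,f(j)]$ is $<\e$ --- depends on $f$ only through its values at the finitely many jump points of $h$. This suggests taking as the $h_n$ \emph{all} finite step functions. Concretely, I would enumerate as $(h_n)_{n\in\bbN}$ all functions of the following form: for a finite tuple $1\le t_1<t_2<\cdots<t_N$ let $h$ be the nonincreasing step function equal to $1-k/N$ on $[t_k,t_{k+1})$, with the conventions $t_0=0$ and $t_{N+1}=\infty$, so that $h$ makes $N$ jumps of size $1/N$ and then vanishes. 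There are only countably many such tuples, and each resulting $h$ satisfies (1) and (2).

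It then remains to verify that every $A_{f,\e}$ is infinite. Fix $f$ and $\e>0$ and an integer $N>1/\e$, and define a tuple by $t_1=1$ and $t_{k+1}=f(t_k-1)+1$ for $1\le k<N$. This is strictly increasing, and by construction $f(t_k-1)<t_{k+1}$ for every $k$. That inequality says precisely that no interval $(j,f(j)]$ can contain two consecutive jump points $t_k,t_{k+1}$, so every such interval meets $\{t_1,\dots,t_N\}$ in at most one point; hence the variation of the corresponding $h$ over any $[j,f(j)]$ is at most $1/N$, i.e.\ $\|h-h\circ f\|_\infty\le 1/N<\e$. Letting $N$ range over all integers $>1/\e$ produces infinitely many distinct tuples, hence infinitely many indices in $A_{f,\e}$, as required.

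The step I expect to be the main obstacle is conceptual rather than computational. The naive hope is to find $h_n$ that are eventually ``uniformly flat,'' but any fixed countable family of growth rates is defeated by a single sufficiently fast $f$ (a diagonal, $\mathfrak{d}$-type phenomenon), so no such sequence exists; one can show directly that for any fixed list of rates there is an $f$ with $A_{f,\e}$ finite. The construction above sidesteps this precisely because flatness within $\e$ constrains $f$ at only finitely many points: instead of asking each $h_n$ to be flat for all $f$, one includes all finite jump patterns, and each individual $f$ is then accommodated --- infinitely often --- by the patterns (the ``$f$-towers'' built above) tailored to outrun it. Apart from this, the only points needing care are the monotonicity of $A_{f,\e}$ in $f$ and $\e$ and the reduction of the finite intersection property to a single $A_{f^\ast,\e^\ast}$, both of which are straightforward.
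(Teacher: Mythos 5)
Your proof is correct and is essentially the paper's own argument: the same family of equal-jump finite step functions, the same tower construction $t_{k+1}>f(t_k-1)$ (the paper's Lemma~\ref{L.Flat.1} uses $m_{l+1}\ge f(m_l)$) to show each $A_{f,\e}$ is infinite, and the same $\max/\min$ reduction for the finite intersection property. The only cosmetic difference is that the paper builds the ultrafilter on the countable set $\bbF$ of step functions itself and transfers to $\bbN$ by a bijection at the end, whereas you fix the enumeration $(h_n)$ up front.
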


We need a lemma.

\begin{lemma}\label{L.Flat.1}
Let $f\colon \bbN \nearrow \bbN$.
Let $n \in \N$ with $n > 0$,
let $m_0 = 0$,
and suppose $m_{l + 1}\geq f (m_l)$ for $0 < l \leq n$.
Set
\[
h = \sum_{l = 0}^{n} \frac{n - l}{n} \chi_{\N \cap [m_l, \, m_{l + 1})}.
\]
Then $\| h - h \circ f \|_{\infty} \leq 1/n$.
\end{lemma}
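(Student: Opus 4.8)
The plan is to reduce the sup-norm estimate to a statement about the variation of $h$ and then to a one-line counting argument. First I would record that $f(j) > j$ for every $j \in \N$: since $f$ is strictly increasing with $f(0) > 0$, an easy induction gives $f(j) \geq j+1$. By the remark preceding the lemma, and since $h$ is nonincreasing, proving $\|h - h \circ f\|_{\infty} \leq 1/n$ amounts to showing that the variation of $h$ on each interval $\N \cap [j, f(j)]$ is at most $1/n$; for a nonincreasing function this variation equals $h(j) - h(f(j))$.

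Next I would rewrite $h$ so that its jumps become transparent. The function takes the value $(n-l)/n$ on $\N \cap [m_l, m_{l+1})$ and the value $0$ for $j \geq m_{n+1}$, so it is nonincreasing and drops by exactly $1/n$ as $j$ crosses each of the points $m_1, m_2, \ldots, m_n$. Concretely,
\[
h(j) = \frac{1}{n}\,\bigl|\{\, l \in \{1, \ldots, n\} : m_l > j \,\}\bigr|,
\]
and hence, using $f(j) > j$,
\[
h(j) - h(f(j)) = \frac{1}{n}\,\bigl|\{\, l \in \{1, \ldots, n\} : j < m_l \leq f(j) \,\}\bigr|.
\]
Thus it suffices to show that the interval $(j, f(j)]$ contains at most one of the points $m_1, \ldots, m_n$.

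The heart of the matter, and the step I expect to carry all the weight, is this last claim, which is exactly where the hypothesis $m_{l+1} \geq f(m_l)$ enters. Suppose toward a contradiction that $m_l$ and $m_{l'}$ both lie in $(j, f(j)]$ with $1 \leq l < l'$. Then $m_{l'} \geq m_{l+1} \geq f(m_l)$ by hypothesis (applicable since $l \geq 1$), while $m_l > j$ forces $f(m_l) > f(j)$ because $f$ is strictly increasing; combining these gives $m_{l'} > f(j)$, contradicting $m_{l'} \leq f(j)$. Hence at most one jump point lies in $(j, f(j)]$, so $h(j) - h(f(j)) \leq 1/n$ for every $j$, which is the desired estimate. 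The advantage of phrasing everything through the counting formula is that it disposes of the boundary cases, where $j$ or $f(j)$ falls outside $[0, m_{n+1})$, uniformly, so that no separate bookkeeping for the initial interval $[m_0,m_1)$ or for the tail is required.
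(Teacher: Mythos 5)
Your proof is correct and follows essentially the same route as the paper's: both arguments fix $j$ and use the hypothesis $m_{l+1}\geq f(m_l)$ together with the monotonicity of $f$ to show that $f$ cannot carry $j$ past more than one of the breakpoints $m_1,\dots,m_n$, so that $h(j)-h(f(j))\leq 1/n$. The paper phrases this by locating $j$ in $[m_l,m_{l+1})$ and deducing $f(j)<f(m_{l+1})\leq m_{l+2}$, while you repackage the same step as a count of jump points in $(j,f(j)]$; the content is identical.
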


\begin{proof}
Fix $j\in \bbN$.
If $j \geq m_{n + 1}$ then $h (j) = 0 = h \circ f (j)$.
Otherwise there is $l$ such that $m_l\leq j<m_{l+1}$.
Then
$f (j) < f (m_{l + 1}) \leq m_{l + 2}$ (writing $m_{n + 2} = \infty$).
Since $h$ is nonincreasing,
\[
\frac{n - l}{n}
 = h (j)
 \geq h \circ f (j)
 \geq \frac{n - l - 1}{n}.
\]
The required estimate is now clear.
\end{proof}

\begin{proof}[Proof of Theorem~\ref{T.Flat.1}]
Let $\bbF$ be the countable set of all nonincreasing functions
$h \colon \bbN \to \Q \cap [0, 1]$ that are eventually zero
and such that $h (0) = 1$.
We start by constructing an ultrafilter $\cV$ on $\bbF$.
For $f\colon \bbN\nearrow\bbN$ and
$\e>0$ let
\[
X_{f,\e}=\{h\in \bbF\colon \|h-h\circ f\|_{\infty} \leq \e\}.
\]
By Lemma~\ref{L.Flat.1} this set is infinite.
On the other hand,
\[
X_{f, \e} \cap X_{g, \delta}
 \supseteq X_{\max(f, g), \, \min(\e, \delta)}.
\]
Therefore
the collection of all $X_{f, \e}$,
for $f \colon \bbN \nearrow\bbN$ and $\e > 0$,
has the  finite intersection property.
Let $\cW$ be any ultrafilter which extends this collection.

Let $k \colon \bbN \to \bbF$ be a bijection,
and set $\cV = \{ A \subseteq \bbN \colon k (A) \in \cW \}$,
which is an ultrafilter on $\bbN$.
We claim that $\cV$ is flat.
The functions $h_n$ required in the definition are given by
$h_n = k (n)$ for $n \in \N$.
Conditions (\ref{D:Flat:1}) and~(\ref{D:Flat:2})
in Definition~\ref{D:Flat} are immediate.
For Condition~(\ref{D:Flat:3}),
let $f \colon \bbN \nearrow \bbN$
and let $\e > 0$.
Then $Y = k^{-1} (X_{f, \e}) \in \cV$,
and for $n \in Y$ we have $h_n \in X_{f, \e}$
by construction, so that $\| h_n - h_n \circ f\|_{\infty} \leq \e$.
This proves~(\ref{D:Flat:3}) in Definition~\ref{D:Flat}.
\end{proof}

\section{Nontrivial relative commutants}

The present section is devoted to the proof of the following result.

\begin{thm}\label{T.Comm.1}
If $\cV$ is a flat ultrafilter then
$F_{\cV} (\BH)\neq \bbC$.
\end{thm}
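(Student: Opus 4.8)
The plan is to produce an explicit non-scalar element of $\BH'\cap\BH^{\cV}=F_{\cV}(\BH)$. Fix an orthonormal basis $(e_j)_{j\in\bbN}$ of $H$, let $D$ be the diagonal operator $De_j=je_j$, and for each $n$ let $a_n=h_n(D)$ be the diagonal contraction with $a_ne_j=h_n(j)e_j$, where $(h_n)$ are the functions witnessing flatness of $\cV$ (Definition~\ref{D:Flat}). Writing $\bfa$ for the image of $(a_n)$ in $\BH^{\cV}$, I would first check that $\bfa$ is not a scalar: since $h_n(0)=1$ and $\limi{j}h_n(j)=0$, the self-adjoint contraction $a_n$ has $\inf\spec(a_n)=0$ and $\sup\spec(a_n)=1$, so $\inf_{\ld\in\bbC}\|a_n-\ld\|=\tfrac12$ for every $n$; hence $\lim_{n\to\cV}\inf_{\ld\in\bbC}\|a_n-\ld\|=\tfrac12>0$ and no fixed scalar can represent $\bfa$. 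It then remains to show $\bfa\in\BH'\cap\BH^{\cV}$, that is, that $(a_n)$ is a $\cV$-central sequence: $\lim_{n\to\cV}\|[a_n,b]\|=0$ for every $b\in\BH$.

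The key reduction is that, since an ultrafilter is closed under finite intersections, it suffices to prove: for each fixed $b$ with $\|b\|\le1$ and each $\e>0$ there is a \emph{single} $f\colon\bbN\nearrow\bbN$ and a $\dt>0$ so that $\|h_n-h_n\circ f\|_{\infty}\le\dt$ forces $\|[a_n,b]\|\le\e$. Granting this, Condition~(\ref{D:Flat:3}) of Definition~\ref{D:Flat} makes $\{n:\|h_n-h_n\circ f\|_{\infty}\le\dt\}$ a member of $\cV$, so $\{n:\|[a_n,b]\|\le\e\}\in\cV$ for every $\e$, which is exactly $\lim_{n\to\cV}\|[a_n,b]\|=0$. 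Everything is thereby reduced to a single quantitative commutator estimate in $\BH$ in which the one function $f$ must be chosen to encode the fixed operator $b$.

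To build $f$ I would let $r_m$ denote the projection onto $\SPAN\{e_0,\dots,e_{m-1}\}$ and exploit two facts about a fixed contraction $b$: for each fixed $j$ one has $\limi{m}\|(1-r_m)be_j\|=\limi{m}\|(1-r_m)b^*e_j\|=0$, while for each fixed $m$ the operators $r_mb,r_mb^*$ have finite rank, so $\limi{j}\|r_mbe_j\|=\limi{j}\|r_mb^*e_j\|=0$. Choosing $f$ to grow fast enough to dominate simultaneously all the resulting thresholds (and to be strictly increasing with $f(0)>0$), I can force the matrix entries $b_{ij}=\langle be_j,e_i\rangle$ to be uniformly small off the staircase region $R_f=\{(i,j):i\le f(j)\text{ and }j\le f(i)\}$. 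On $R_f$, if $i\le j$ then $j\le f(i)$, so by the remark following Notation~\ref{N:Up} the variation of the nonincreasing $h_n$ over $[i,j]$ is at most $\|h_n-h_n\circ f\|_{\infty}\le\dt$; hence the Schur multiplier $(h_n(i)-h_n(j))$ has all entries bounded by $\dt$ on $R_f$. Viewing $[a_n,b]$ as the Hadamard product $(h_n(i)-h_n(j))\,b_{ij}$ and splitting along $R_f$ and its complement reduces the theorem to bounding a near-diagonal small-multiplier part and an off-staircase small-$b$ part.

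The main obstacle is the operator-norm control of these two pieces: neither a triangle inequality over matrix diagonals nor a row/column $\ell^2$ bound suffices, because the staircase has unbounded width (so the logarithmic growth of triangular truncation must be circumvented) and because band-truncating $b$ does not converge in norm (so the off-staircase part cannot simply be discarded). I expect to overcome this by using the monotone-average representation $a_n=\sum_{k}\bigl(h_n(k)-h_n(k+1)\bigr)r_{k+1}$, which exhibits $a_n$ as a convex combination of the initial-segment projections $r_{k+1}$ and turns $[a_n,b]$ into the average $\sum_k\bigl(h_n(k)-h_n(k+1)\bigr)[r_{k+1},b]$ of corner operators $r_{k+1}b(1-r_{k+1})$. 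A Cotlar--Stein / $TT^*$ almost-orthogonality estimate across the blocks cut out by $f$, combined with the fast growth of $f$ that makes the off-staircase corners of $b$ genuinely small and the flatness that makes the increments $h_n(k)-h_n(k+1)$ summably small over each block, should yield $\|[a_n,b]\|\le\e$ on the $\cV$-large set where $\|h_n-h_n\circ f\|_{\infty}\le\dt$. Combining the two steps produces a non-scalar element of $F_{\cV}(\BH)$, proving $F_{\cV}(\BH)\ne\bbC$.
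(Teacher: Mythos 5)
Your witness is exactly the paper's: the diagonal contractions $a_n=\sum_j h_n(j)e_j$ of Notation~\ref{N:ah}, with the same (correct) computation $\inf_{\ld\in\bbC}\|a_n-\ld\|=\tfrac12$ showing nontriviality, and the same reduction via condition~(3) of Definition~\ref{D:Flat} to a quantitative statement of the form ``$\|h_n-h_n\circ f\|_\infty\le\dt$ forces $\|[a_n,b]\|\le\e$'' for finitely many $f$'s depending on $b$. You also correctly diagnose that the whole difficulty is the operator-norm estimate and that entrywise smallness (whether of the multiplier $h_n(i)-h_n(j)$ on the staircase or of $b_{ij}$ off it) proves nothing, because of the unboundedness of triangular truncation. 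But the proposal stops exactly there: the concluding Cotlar--Stein step is asserted, not proved, and as formulated it does not go through. The pieces $T_m=\sum_{k}\bigl(h_n(k)-h_n(k+1)\bigr)[r_{k+1},b]$ (sum over $k$ in the $m$-th block cut out by $f$) are \emph{not} almost orthogonal for a general $b$: the operators $r_{k+1}b(1-r_{k+1})$ have nested ranges and nested initial spaces as $k$ grows, so neither $T_mT_{m'}^*$ nor $T_m^*T_{m'}$ decays in $|m-m'|$, and the hypotheses of Cotlar--Stein fail. One can force approximate orthogonality by first cutting each $T_m$ down to a bounded number of blocks in both rows and columns, but that cutting is precisely the step that needs justification, and it is not supplied.

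The missing idea is to decompose $b$ rather than $a_n$. The paper's Lemma~\ref{L:4} shows that any $b$ is, up to a compact perturbation of norm $<\dt$, the sum of \emph{two} operators $b_0,b_1$, each block-diagonal with respect to a decomposition of the form $\vec E^{g_i}$ for suitable $g_0,g_1\colon\bbN\nearrow\bbN$ (built from the very corner estimates $\|(1-r_{f(k+1)})b\,r_{f(k)}\|<\rh_k$ that you describe; the point is to pair adjacent blocks in two staggered ways, so each summand is genuinely block-diagonal and the finite-overlap issue disappears). For block-diagonal $b_i$ the commutator bound is then immediate and elementary (Lemma~\ref{L.Comm.1}): $a_{h_n}$ lies within $\|h_n-h_n\circ g_i\|_\infty$ of an operator that is a scalar on each block of $\vec E^{g_i}$, hence central in $\calD(g_i)$, so $\|[a_{h_n},b_i]\|\le2\|h_n-h_n\circ g_i\|_\infty\|b_i\|$; the commutator with the small compact remainder is trivially small. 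This replaces your unproved almost-orthogonality estimate with a two-line argument and is the step you would need to add (or rigorously substitute for) to complete the proof.
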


\begin{notation}\label{N:ah}
Fix an orthonormal basis $(\xi_n)_{n \in \N}$ for our separable
infinite-dimensional complex Hilbert space $H$, and let $e_n$ be the
orthogonal projection onto $\C \xi_n$. Let $\bbD$ be the set of all
nonincreasing functions $h \colon \N \to [0, 1]$
such that $h (0) = 1$ and $\lim_{n \to \infty} h (n) = 0$.
For $h\in \bbD$ define a
compact operator $a_h$ (with $\| a_h \| = 1$ since $h (0) = 1$) by
\[
a_h = \sum_{j = 0}^{\infty} h (j) e_j.
\]
\end{notation}

\begin{notation}\label{N:DE}
Let $\vec{E} = (E_n)_{n \in \N}$ be a family of
closed orthogonal subspaces of $H$
such that $H = \bigoplus_{n = 0}^{\infty} E_n$.
Let $\calD \big( \vec{E} \big)$ be the von Neumann algebra
\[
\{a\in \BH \colon {\mbox{$a E_n \subseteq E_n$ for all $n \in \N$}} \}.
\]
For $f \colon \bbN \nearrow \bbN$
(as in Notation~\ref{N:Up}),
and with $(\xi_n)_{n \in \N}$ as in Notation~\ref{N:ah},
let $f^n$ be the composite
$f \circ f \circ \cdots \circ f$ (with $n$ terms),
and take $f^0$ to be the constant function with value~$0$.
Define $\vec{E}^f$ by
\[
\vec{E}^f_n = \SPAN \{ \xi_j \colon f^n (0) \leq j < f^{n + 1} (0) \},
\]
and set $\calD (f) = \calD \big( \vec{E}^f \big)$.
\end{notation}

\begin{lemma}\label{L.Comm.1}
Adopt Notation \ref{N:ah} and Notation~\ref{N:DE}.
If $h \in \bbD$, $f \colon \bbN \nearrow \bbN$,
and $\| h - h \circ f \|_{\infty}\leq \e$, then for every
$b \in \calD (f)$ we have $\| [a_h, b] \| \leq 2 \e \|b\|$.
\end{lemma}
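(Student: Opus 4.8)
The plan is to exploit the fact that the block decomposition $\vec{E}^f$ is tailored so that $h$ oscillates by at most $\e$ across each block, while every element of $\calD(f)$ is block diagonal for this decomposition. Write $p_n$ for the orthogonal projection onto $\vec{E}^f_n = \SPAN\{\xi_j : f^n(0) \le j < f^{n+1}(0)\}$, so that $p_n = \sum_{f^n(0)\le j<f^{n+1}(0)} e_j$ and $\sum_n p_n = 1$ (strong convergence). The condition $b\in\calD(f)$ says exactly that $b$ preserves each $\vec{E}^f_n$, equivalently that $b$ commutes with every $p_n$.

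First I would produce a block-constant approximant to $a_h$. Set $c_n = h(f^n(0))$ and define $a' = \sum_n c_n p_n \in \BH$, the diagonal operator taking the constant value $c_n$ on the $n$th block; since $0\le c_n\le 1$, this converges strongly and $\|a'\|\le 1$. Because $a'$ acts as the scalar $c_n$ on $\vec{E}^f_n$ while $b$ maps $\vec{E}^f_n$ into itself, a one-line check on each block shows $[a',b]=0$, i.e. $a'\in\calD(f)$ and commutes with $b$. Consequently $[a_h,b]=[a_h-a',\,b]$, and the problem reduces to estimating $\|a_h-a'\|$.

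The crux is the estimate $\|a_h - a'\|\le\e$. Both operators are diagonal in the basis $(\xi_j)$, so it suffices to bound $|h(j)-c_n|$ for $j$ in the $n$th block. This is where the geometry of the blocks enters: the block $\N\cap[f^n(0),\,f^{n+1}(0))$ is, since $f^{n+1}(0)=f(f^n(0))$, precisely an interval of the form $[j_0,\,f(j_0))$ with $j_0=f^n(0)$. As the remark after Notation~\ref{N:Up} records, the hypothesis $\|h-h\circ f\|_\infty\le\e$ says exactly that the variation of $h$ on every interval $\N\cap[j,f(j)]$ is at most $\e$. Applying this with $j=f^n(0)$, together with the monotonicity of $h$ (for $f^n(0)\le j< f^{n+1}(0)$ one has $h(f^{n+1}(0))\le h(j)\le h(f^n(0))=c_n$), yields
\[
0\le c_n - h(j)\le h(f^n(0))-h(f^{n+1}(0))\le\e
\]
for every $j$ in the block, hence $\|a_h-a'\|\le\e$.

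Finally I would combine these. Since $a'\in\calD(f)$ commutes with $b$, the standard commutator inequality gives
\[
\|[a_h,b]\| = \|[a_h-a',\,b]\| \le 2\|a_h-a'\|\,\|b\| \le 2\e\|b\|,
\]
which is the claimed bound. I do not anticipate a genuine obstacle here: the only real content is recognizing that the blocks $\vec{E}^f_n$ are the intervals $[f^n(0),f(f^n(0)))$ on which $\|h-h\circ f\|_\infty\le\e$ controls the oscillation of $h$; after subtracting the block-diagonal part $a'$ of $a_h$, everything else is the elementary estimate $\|[x,b]\|\le 2\|x\|\,\|b\|$.
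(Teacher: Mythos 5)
Your proof is correct and is essentially identical to the paper's: the paper also defines the block-constant approximant $y=\sum_n h(f^n(0))q_n$, shows $\|a_h-y\|\leq\e$ via the same oscillation estimate on each block $[f^n(0),f^{n+1}(0))$, and concludes from the fact that $y$ is central in $\calD(f)$. No gaps.
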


\begin{proof}
Let $q_n$ be the orthogonal projection onto $E_n^f$.
We can write
\[
a_h = \sum_{n = 0}^{\infty} q_n a_h q_n.
\]
Define
\[
y = \sum_{n = 0}^{\infty} h (f^n (0)) q_n.
\]
(Both series converge in norm because $\lim_{n \to \infty} h (n) = 0$.)
For any $n \in \N$
and for $f^n (0) \leq k < f^{n + 1} (0)$, we have
\[
h (f^n (0)) \geq h (k) \geq h (f^{n + 1} (0)) \geq h (f^n (0)) - \e,
\]
so $\big\| q_n a_h q_n - h (f^n (0)) q_n \big\| \leq \ep$.
Therefore $\| a_h - y \| \leq \ep$.
Since $y$ is a central element of $\calD (f)$,
the conclusion follows.
\end{proof}

\begin{lem}\label{L:TNorm}
Let $A$ be a unital \ca, let $e, f \in A$ be orthogonal \pj s,
and let $a \in A$.
Then
\[
\| e a e + e a f + f a e \| \leq 2 \| a \|.
\]
\end{lem}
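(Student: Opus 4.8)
The plan is to realize $eae+eaf+fae$ as a compression of $a$ with a single diagonal corner deleted, and then to control it by the triangle inequality. The only real content is spotting the right grouping of terms.

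First I would set $p = e + f$. Since $e$ and $f$ are orthogonal \pj s we have $ef = fe = 0$, so $p^2 = e^2 + ef + fe + f^2 = e + f = p$ and $p^* = p$; thus $p$ is again a \pj. Compressing $a$ by $p$ gives
\[
pap = (e+f)a(e+f) = eae + eaf + fae + faf,
\]
and rearranging yields the key decomposition
\[
eae + eaf + fae = pap - faf.
\]

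Next I would estimate the two terms on the right separately. Because $p$ is a \pj\ we have $\|p\| \leq 1$, so $\|pap\| \leq \|p\|^2 \|a\| \leq \|a\|$, and likewise $\|faf\| \leq \|a\|$. Applying the triangle inequality to the decomposition gives
\[
\|eae + eaf + fae\| \leq \|pap\| + \|faf\| \leq 2\|a\|,
\]
which is exactly the claimed bound.

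There is essentially no obstacle here beyond noticing the decomposition: thinking of $p$ as the unit of a $2 \times 2$ block corner, $eae + eaf + fae$ is just the matrix of $a$ over that corner with the lower-right entry $faf$ erased, so its norm is controlled by the full compression $pap$ together with that one erased corner. No positivity or spectral reasoning is needed; the estimate rests solely on submultiplicativity of the norm and the fact that a \pj\ has norm at most $1$.
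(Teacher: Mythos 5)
Your proof is correct and is essentially identical to the paper's: both write $eae+eaf+fae = (e+f)a(e+f) - faf$ and apply the triangle inequality with $\|(e+f)a(e+f)\|, \|faf\| \leq \|a\|$. Nothing to add.
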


\begin{proof}
We have
\[
e a e + e a f + f a e
  = (e + f) a (e + f) - f a f,
\]
and $\| (e + f) a (e + f) \|, \, \| f a f \| \leq \| a \|$.
\end{proof}

Examples using $2 \times 2$ matrices show that
it is not possible to replace the constant~$2$ in Lemma~\ref{L:TNorm}
by~$1$, even if $a$ is selfadjoint.

The use of `stratification' of $\BH$ into von Neumann algebras
$\calD (g_l)$ as given in Lemma~\ref{L.Comm.2} below
resembles the use in
\cite[Lemma~3.1]{Fa:Calkin}, and the following lemma is a minor
improvement to \cite[Lemma~1.3]{Fa:Calkin}.

\begin{lem}\label{L:4}
Let $F = \{ a_1, a_2, \ldots, a_k \} \subset \BH$
be finite, and let $\dt > 0$.
Then there exist $g_0, g_1 \colon \N \nearrow \N$ and
decompositions $a_j = a_j^{(0)} + a_j^{(1)} + c_j$
for $j = 1, 2, \ldots, k$,
such that for $j = 1, 2, \ldots, k$ we have:
\begin{enumerate}
\item\label{L:4:a0Comm}
$a_j^{(0)} \in \calD (g_0)$.
\item\label{L:4:a1Comm}
$a_j^{(1)} \in \calD (g_1)$.
\item\label{L:4:Bound}
$\big\| a_j^{(0)} \big\|, \big\| a_j^{(1)} \big\| \leq 2 \| a_j \|$.
\item\label{L:4:Cpt}
$c_j$ is compact.
\item\label{L:4:Small}
$\| c_j \| < \dt$.
\end{enumerate}
\end{lem}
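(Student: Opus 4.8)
The plan is to build the two ``interlacing'' block structures $\calD(g_0)$ and $\calD(g_1)$ by running a single recursion that chooses the breakpoints of $g_0$ and $g_1$ alternately, so that every operator $a_j$ looks approximately block-diagonal with respect to at least one of the two structures on each scale. The intuition is the standard stratification trick: a bounded operator $a$ is approximately supported near the diagonal in the sense that $\limi{n}\|(1-r_n)a(1-r_n)-\text{(its own tail)}\| $ behaves well, and more usefully, for any finite-rank cutoff the ``far off-diagonal'' part is small once the blocks are wide enough. By choosing the block boundaries of $g_0$ to sit strictly inside the blocks of $g_1$ and vice versa, each genuine off-diagonal contribution of $a_j$ is either captured inside a single $g_0$-block or inside a single $g_1$-block, up to an arbitrarily small compact error.

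Concretely, I would first fix the orthonormal basis $(\xi_n)$ and the finite-rank projections $r_n$ onto $\SPAN\{\xi_0,\dots,\xi_{n-1}\}$ as in the proof of Theorem~\ref{T.S.3}, and use the observation there that $\limi{n}\|r_n x r_n\|=\|x\|$ and, more to the point, that $\limi{n}\|(1-r_n)\,a_j\,r_m\|=0$ for each fixed $m$. The recursion produces a strictly increasing sequence $0=p_0<p_1<p_2<\cdots$ of cutoff levels; the even-indexed $p$'s will be the breakpoints defining $g_0$ and the odd-indexed ones those defining $g_1$ (so that $g_0$ and $g_1$ are the strictly increasing functions whose iterated values $g_0^n(0)$, $g_1^n(0)$ recover these breakpoints, as in Notation~\ref{N:DE}). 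At stage $N$, having chosen $p_0,\dots,p_N$, I would pick $p_{N+1}$ so large that for every $j\le k$ the portion of $a_j$ connecting the block $[p_{N-1},p_N)$ to everything beyond $p_{N+1}$ has norm below $\dt/2^{N}$ (possible because $\|(1-r_{p_{N+1}})a_j r_{p_N}\|\to 0$). This guarantees that the only off-diagonal matrix entries of $a_j$ of appreciable size connect consecutive blocks.

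Given the breakpoints, I would define the decomposition as follows. Writing $P^{(0)}_n$ for the projections onto the $g_0$-blocks and $P^{(1)}_n$ for those onto the $g_1$-blocks, set $a_j^{(0)}=\sum_n P^{(0)}_n a_j P^{(0)}_n$ (the $g_0$-block-diagonal part, which lies in $\calD(g_0)$ by construction), and set $a_j^{(1)}=\sum_n P^{(1)}_n a_j P^{(1)}_n - \text{(the part already counted in }a_j^{(0)})$, arranged so that $a_j^{(1)}\in\calD(g_1)$ captures exactly the near-diagonal entries that straddle a $g_0$-boundary. Because consecutive $g_0$-blocks together sit inside a single $g_1$-block (interlacing), every entry straddling a $g_0$-boundary is internal to a $g_1$-block, and conversely; the residue $c_j=a_j-a_j^{(0)}-a_j^{(1)}$ then consists only of entries connecting non-consecutive blocks, whose total norm is controlled by the $\dt/2^N$ choices and which is compact because it is a norm-convergent sum of finite-rank terms. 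The bound $\|a_j^{(0)}\|,\|a_j^{(1)}\|\le 2\|a_j\|$ is exactly where Lemma~\ref{L:TNorm} enters: each block-diagonal compression is split into a diagonal piece plus two adjacent off-diagonal pieces (the $eae+eaf+fae$ pattern relative to a pair of consecutive blocks), and that lemma gives the factor~$2$.

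The main obstacle, and the step I would spend the most care on, is the bookkeeping that makes the two block-diagonalizations genuinely \emph{complementary}: I must verify that every matrix entry of $a_j$ is assigned to exactly one of $a_j^{(0)}$, $a_j^{(1)}$, or $c_j$, that the assignment respects the algebra memberships $\calD(g_0)$ and $\calD(g_1)$, and that the $2\|a_j\|$ estimate survives the infinite sum over blocks (one must apply Lemma~\ref{L:TNorm} blockwise and observe that the blockwise pieces act on orthogonal summands, so their supremum rather than their sum controls the norm). Convergence of the $c_j$ series in norm and its compactness are then routine given the geometric control on the straddling-beyond entries. The interlacing condition $g_0^n(0)<g_1^m(0)<g_0^{n+1}(0)$ type inequalities must be recorded explicitly during the recursion, since they are what forces each ``consecutive-block'' entry to live inside a single block of the \emph{other} structure.
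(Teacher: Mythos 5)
Your proposal is correct and follows essentially the same route as the paper's proof: choose fine cutoffs so that the far-off-diagonal parts of each $a_j$ are summably small, group the resulting blocks into two interlaced coarse structures, assign the diagonal and adjacent-block entries to $a_j^{(0)}$ and $a_j^{(1)}$ (using Lemma~\ref{L:TNorm} blockwise, with the blockwise pieces acting on orthogonal summands, for the factor~$2$), and absorb the non-adjacent entries into a small compact $c_j$. The only differences from the paper's argument are cosmetic---which of the two summands receives the odd-indexed diagonal blocks, and the exact bookkeeping of the error constants and of how $g_0, g_1$ are read off from the breakpoints.
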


\begin{proof}
Let $p_n$ be the
orthogonal \pj\  onto
$\spn (\{ \xi_0, \xi_1, \ldots, \xi_{n - 1} \})$.
Thus $p_0 = 0$.
Also choose $\rh_0, \rh_1, \ldots > 0$
such that $2 \sum_{n = 0}^{\I} \rh_{n + 1} \leq \dt$.

We claim that there is
a strictly increasing function $f \colon \N \to \N$ such
that $f (0) = 0$
and such that for every $n \in \N$ and every $a \in F$, we
have
\begin{equation}\label{L:4:Est}
\big\| (1 - p_{f (n + 1)} ) a p_{f (n)} \big\|
 < \rh_n \andeqn \big\| p_{f (n)} a (1 - p_{f (n + 1)} ) \big\|
 < \rh_n.
\end{equation}
(For $n = 0$ the condition is vacuous because $p_0 = 0$.)
We construct $f$ recursively.
Start by taking $f (0) = 0$.
Given $f (n)$, use
compactness of $p_{f (n)} a$ and $a p_{f (n)}$,
finiteness of $F$, and the fact
that $(p_m)_{m \in \N}$ is an approximate identity for $\cK(H)$,
to choose $m > f (n)$ such that
\[
\big\| (1 - p_m ) a p_{f (n)} \big\|
 < \rh_n \andeqn \big\| p_{f (n)} a (1 - p_m ) \big\|
 < \rh_n
\]
for all $a \in F$.
Then set $f (n + 1) = m$.
This proves the claim.

For $n \in \N$, we now set $q_n = p_{f (n + 1)} - p_{f (n)}$.
Since $p_{f (0)} = 0$,
the series $\sum_{n = 0}^{\I} q_n$ converges to~$1$ in the strong
operator topology.

For $j = 1, 2, \ldots, k$, define,
with convergence in the strong operator topology,
\[
a_j^{(0)}
  = \sum_{n = 0}^{\I} \big( q_{2 n} a_j q_{2 n}
                             + q_{2 n} a_j q_{2 n + 1}
                             + q_{2 n + 1} a_j q_{2 n} \big)
\]
and
\[
a_j^{(1)}
  = \sum_{n = 0}^{\I} \big( q_{2 n + 1} a_j q_{2 n + 1}
                             + q_{2 n + 1} a_j q_{2 n + 2}
                             + q_{2 n + 2} a_j q_{2 n + 1} \big).
\]
The $n$th term in the series for
$a_j^{(0)}$ is in $(q_{2 n} + q_{2 n + 1}) \BH (q_{2 n} + q_{2 n + 1})$,
and the $n$th term in the series for
$a_j^{(1)}$ is
in $(q_{2 n + 1} + q_{2 n + 2}) \BH (q_{2 n + 1} + q_{2 n + 2})$.
Accordingly,
if for $j \in \N$
we set $g_0 (j) = f (2 j + 2)$ and $g_1 (j) = f (2 j + 1)$,
then $g_0, g_1 \colon \N \nearrow \N$
and parts (\ref{L:4:a0Comm}) and~(\ref{L:4:a1Comm}) are satisfied.
Part~(\ref{L:4:Bound}) follows from Lemma~\ref{L:TNorm}.

The estimates~(\ref{L:4:Est}) give, for every $n \in \N$,
\[
\big\| q_n a_j (1 - p_{f (n + 2)}) \big\|
   = \big\| q_n p_{f (n + 1)} a_j (1 - p_{f (n + 2)}) \big\|
   < \rh_{n + 1},
\]
and similarly
\[
\big\| (1 - p_{f (n + 2)}) a_j q_n \big\|
   < \rh_{n + 1}.
\]
Therefore the series
\[
\sum_{n = 0}^{\I} \big[ q_n a_j (1 - p_{f (n + 2)})
                          + (1 - p_{f (n + 2)}) a_j q_n \big]
\]
converges in norm to a compact operator $c_j$ with
\[
\| c_j \| < 2 \sum_{n = 0}^{\I} \rh_{n + 1} \leq \dt.
\]
This is parts (\ref{L:4:Cpt}) and~(\ref{L:4:Small}).
Also, $a_j^{(0)} + a_j^{(1)} + c_j = a_j$ is clear.
\end{proof}

\begin{lemma} \label{L.Comm.2}
Let $\cV$ be an arbitrary ultrafilter on $\bbN$.
For $\bfa \in \BH^{\cV}$ the following are equivalent:
\begin{enumerate}
\item \label{L.Comm.2.1}
$\bfa \in \BH' \cap \BH^{\cV}$.
\item \label{L.Comm.2.2}
$\bfa \in \bigcap_{f \colon \bbN \nearrow \bbN}
                 \big[ \calD (f)' \cap \BH^{\cV} \big]$.
\end{enumerate}
\end{lemma}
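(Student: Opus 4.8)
The plan is to prove the two implications separately, with the reverse implication carrying essentially all the weight. The forward direction $(\ref{L.Comm.2.1}) \Rightarrow (\ref{L.Comm.2.2})$ is immediate: each algebra $\calD(f)$ is by definition a subalgebra of $\BH$, so an element of $\BH^{\cV}$ that commutes with the (constant image of the) whole of $\BH$ certainly commutes with each $\calD(f)$, and therefore lies in $\calD(f)' \cap \BH^{\cV}$ for every $f \colon \bbN \nearrow \bbN$.

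For the reverse implication $(\ref{L.Comm.2.2}) \Rightarrow (\ref{L.Comm.2.1})$ I would fix an arbitrary $c \in \BH$ and show that $[\bfa, c] = 0$ in $\BH^{\cV}$; since $c$ is arbitrary, this yields $\bfa \in \BH' \cap \BH^{\cV}$. The idea is to feed $c$ into the decomposition lemma, Lemma~\ref{L:4}, which is tailored exactly for this purpose. Given $\dt > 0$, I would apply Lemma~\ref{L:4} to the one-element set $F = \{c\}$ to obtain strictly increasing functions $g_0, g_1 \colon \bbN \nearrow \bbN$ and a splitting $c = c^{(0)} + c^{(1)} + c'$ with $c^{(0)} \in \calD(g_0)$, $c^{(1)} \in \calD(g_1)$, and $\| c' \| < \dt$.

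Next I would invoke hypothesis~(\ref{L.Comm.2.2}) for the two functions $g_0$ and $g_1$ produced by the lemma: since $\bfa$ commutes with every element of $\calD(g_0)$ and of $\calD(g_1)$, we have $[\bfa, c^{(0)}] = [\bfa, c^{(1)}] = 0$, and hence $[\bfa, c] = [\bfa, c']$. The general C*-estimate $\| [x, y] \| \leq 2 \|x\| \, \|y\|$ then gives $\| [\bfa, c] \| = \| [\bfa, c'] \| \leq 2 \| \bfa \| \, \dt$. As $\dt > 0$ was arbitrary, this forces $[\bfa, c] = 0$.

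I do not expect a genuine obstacle here, since Lemma~\ref{L:4} does the real work. The one point to keep in mind is that the functions $g_0, g_1$ depend on $\dt$ (and on $c$), but this is harmless, because hypothesis~(\ref{L.Comm.2.2}) quantifies over \emph{all} $f \colon \bbN \nearrow \bbN$ and so applies to whatever $g_0, g_1$ the lemma returns at each value of $\dt$. The only thing needing care is recognizing that the bound on the compact remainder $c'$ is the controlling term in the commutator estimate, which is precisely what letting $\dt \to 0$ exploits.
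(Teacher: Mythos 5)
Your proposal is correct and follows essentially the same route as the paper's own proof: the forward implication is noted as trivial, and the reverse implication applies Lemma~\ref{L:4} to the single operator, uses the hypothesis for the two functions $g_0, g_1$ it returns, and lets the bound on the compact remainder go to zero. The only differences are cosmetic (the paper takes the remainder of norm at most $\delta/2$ to get the bound $\delta\|\bfa\|$).
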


\begin{proof}
The implication
from \eqref{L.Comm.2.1} to~\eqref{L.Comm.2.2} is trivial.

Assume~\eqref{L.Comm.2.2} and fix $\bfb\in \BH$.
Fix $\delta>0$.
By
Lemma~\ref{L:4} we can find $g_0, g_1 \colon \bbN \nearrow \bbN$
and a decomposition $\bfb=\bfb_0+\bfb_1+\bfc$ such
that $\bfb_j \in \calD (g_j)$ for $j = 0, 1$
and $\|\bfc\| \leq \tfrac{1}{2} \delta$.
Thus
$[\bfa, \bfb] = [\bfa, \, \bfb_0 + \bfb_1 + \bfc] = [\bfa, \bfc]$
and therefore $\|[\bfa, \bfb]\| \leq \delta \|\bfa\|$.
Since
$\bfb\in \BH$ and $\delta>0$ were arbitrary,
$\bfa \in \BH' \cap \BH^{\cV}$.
\end{proof}

\begin{proof}[Proof of Theorem~\ref{T.Comm.1}]
Fix a sequence $(h_n)_{n \in \N}$ of functions
witnessing the flatness of $\cV$.
Let $a_n = a_{h_n} = \sum_{j = 0}^{\infty} h_n (j) e_n$,
as in Notation~\ref{N:ah}.
Fix $f\colon \bbN\nearrow\bbN$.
Since $\lim_{n \to \cV} \| h_n - h_n \circ f \|_{\infty} = 0$,
by Lemma~\ref{L.Comm.1} the sequence $(a_n)_{n \in \N}$
is a representing sequence
of an element $\bfa$ of $\calD(f)' \cap \BH^{\cV}$.
Since $f\colon \bbN\nearrow\bbN$ was arbitrary,
by Lemma~\ref{L.Comm.2} we have $\bfa \in \BH' \cap \BH^{\cV}$.

Therefore $(a_n)_{n \in \N}$ is a $\cV$-central sequence.
Since each $a_n$ is compact and
has norm one, this sequence is nontrivial.
\end{proof}

\section{Concluding remarks}

The following is what remains of Kirchberg's question.

\begin{question} \label{Q1}
Does there exist a  nonprincipal ultrafilter $\cV$ on $\bbN$
such that $F_{\cV}(\BH)=\bbC$?
\end{question}

By our Theorem~\ref{T2}, the Continuum Hypothesis implies a positive
answer, but the question is whether such an ultrafilter can be
constructed in ZFC.
A `typical' statement independent from ZFC is
decided by the Continuum Hypothesis or a strengthening such as
Jensen's diamond principle in one way and by Martin's Axiom or a
strengthening such as the Proper Forcing Axiom in another way.
(See
\cite[Chapter II]{Ku:Book} for an introduction to Martin's Axiom.)
An
example in theory of operator algebras is the statement `the Calkin
algebra has an outer automorphism,' which follows from the Continuum
Hypothesis (\cite{PhWe:Calkin}) and is incompatible with a
consequence of the Proper Forcing Axiom (\cite{Fa:Calkin}). This,
however, is not the case with Question~\ref{Q1}.
It is well-known that (a rather weak form of) Martin's Axiom implies
the existence of selective ultrafilters, and therefore the existence
of $\cU$ such that $F_{\cU}(\BH)=\bbC$. A closer look at the proof of
Proposition~\ref{P.selective} reveals that it goes through when the
Continuum Hypothesis is weakened to the assertion that for every
family $\cF\subseteq [\bbN]^{\infty}$ such that the intersection
of any finitely many sets in $\cF$ is infinite,
and such that $|\cF|<2^{\aleph_0}$,
there is $B\in [\bbN]^{\infty}$ such that
$B\setminus A$ is finite for all $A\in \cF$.
This assertion (known as
$\mathfrak{p}=2^{\aleph_0}$) is an easy consequence of Martin's Axiom.
(See \cite[Section~7]{Bla:Cardinal}.)

By a result of Kunen (\cite{Ku:Some}),
if ZFC is consistent then so is the theory `ZFC + there are no
selective ultrafilters'. However, in Kunen's model there exists an
ultrafilter $\cV$ such that $F_{\cV} (\BH) = \bbC$. An ultrafilter
$\cV$ is a {\emph{P-point}} if for every $g\colon \bbN\to \bbN$ there
is $A\in \cV$ such that $g$ is either constant or finite-to-one on
$A$. In~\cite{FaSt:Flat} it is proved that if $\cV$ is a P-point then
$F_{\cV}(\BH)=\bbC$. While P-points exist in Kunen's model, Shelah
has proved that if ZFC is consistent then so is ZFC + `there are no
P-points'.
(See~\cite{Sh:PIF}.)

We could not resolve the following question.

\begin{question}
If $\cV$ is an ultrafilter such that $F_{\cV} (\BH) \neq \bbC$,
does it follow that $\cV$ is flat?
\end{question}

As pointed out in the introduction,
tools from the logic of metric structures~(\cite{BYBHU}) are very
relevant to the study of ultrapowers of C*-algebras.
(See~\cite{FaHaSh:Model} for recent applications.)
For example, it
would be interesting to reformulate some of the results
of~\cite{Kirc:Central} using the language of model theory. In
particular, can the notion of $\sigma$-sub-Stonean
(\cite[Definition~1.4]{Kirc:Central}) be replaced with the notion of
$\aleph_1$-saturated (\cite[Definition~7.5]{BYBHU}, the case when
$\kappa=\aleph_1$, the least uncountable cardinal)?

\end{document}